\def\ie{\emph{i.e., }}
\def\eg{\emph{e.g., }}
\def\cf{\emph{cf., }}
\def\S{\mathbb S}
\def\DD{\mathbb D}
\def\R{\mathbb R}
\def\Z{\mathbb Z}
\def\Q{\mathbb Q}
\def\D{\EuScript D}
\numberwithin{equation}{section}
\newtheorem*{problem}{Open Problem}
\newtheorem{theorem}{Theorem}[section]
\newtheorem{prop}[theorem]{Proposition}
\newtheorem{add}[theorem]{Addendum}
\newtheorem{cor}[theorem]{Corollary}
 \theoremstyle{remark}
\newtheorem{remark}[theorem]{Remark}
\newtheorem{lemma}[theorem]{Lemma}
\newtheorem*{example}{Example}
\newtheorem*{exmcg}{Example: Mapping Class Group of $\S^2\times \S^2$}
\theoremstyle{remark}
\renewcommand*\env@matrix[1][*\c@MaxMatrixCols c]{%
  \hskip -\arraycolsep
  \let\@ifnextchar\new@ifnextchar
  \array{#1}}
\begin{document}
\author{Andrey Gogolev and Federico Rodriguez Hertz$^\ast$}
\title[Absence of Anosov diffeomorphisms]{Manifolds with higher homotopy which do not support Anosov diffeomorphisms}
\thanks{$^\ast$Both authors were partially supported by NSF grants.\\
MSC Primary 37D20; Secondary 55R10, 57R19, 14C17, 37C25}
\begin{abstract}
We show that various classes of closed manifolds with non-trivial higher homotopy groups do not support (transitive) Anosov diffeomorphisms. In particular we show that a finite product of spheres at least one of which is even-dimensional does not support transitive Anosov diffeomorphisms.
\end{abstract}
\date{}
 \maketitle

\section{Introduction}
Let $M$ be a compact smooth $n$-dimensional Riemannian manifold.
Recall that a diffeomorphism $f$ is called {\it Anosov} if there
exist constants $\mu \in (0,1)$ and $C>0$ along with a
$df$-invariant splitting $TM=E^s\oplus E^u$ of the tangent bundle of
$M$, such that for all $m \ge 0$
\begin{multline*}
\qquad\|df^mv\|\le C\lambda^m\|v\|,\;v\in E^s,\; \\
\qquad\shoveleft{\|df^{-m}v\|\le C\lambda^{m}\|v\|,\;v\in E^u.
\hfill}
\end{multline*}

The invariant distributiuons $E^s$ and $E^u$ are called the {\it stable} and {\it unstable } distributions.
If either fiber of $E^s$ or $E^u$ has dimension $k$ with $k\le\lfloor n/2\rfloor$ then $f$ is
called a {\it codimension $k$} Anosov diffeomorphism. An Anosov diffeomorphism is called {\it transitive} if there exist a point whose orbit is dense in $M$.

All currently known examples of Anosov diffeomorphisms are conjugate to algebraic automorphisms of infranilmanifolds. It is a famous open problem that dates back to Anosov and Smale to decide whether there are other examples of Anosov diffeomorphisms. In particular, Smale ~\cite{Sm} points out that it is likely that manifolds that support Anosov diffeomorphisms are covered by Euclidean spaces. Among significant partial results towards classification we mention the following two. Franks and Newhouse ~\cite{Fr, N} proved that codimension one Anosov diffeomorphisms only exist on manifolds that are homeomorphic to tori. Brin and Manning ~\cite{Br, BrM} showed that ``sufficiently pinched" Anosov diffeomorphisms only exist on infranilmanifolds.

Our purpose is to show that certain manifolds with non-trivial higher homotopy groups do not support Anosov diffeomorphisms. 
A model question was asked by \'E. Ghys in  the nineties and, surprisingly, remained unanswered: {\it does $\S^2\times\S^2$ support an Anosov diffeomorphism?} It turns out that it is easy to rule out Anosov diffeomorphisms on $\S^2\times\S^2$, as we will explain at the end of the introduction.\footnote{After finishing our paper we have received a preprint by Kleptsyn and Kudryashov~\cite{KK} that answers this particular question using similar ideas but in a more convoluted fashion.} 

Let us now present our more general results.
Many statements below have  two parts, one assuming transitivity of
the diffeomorphism and the other (a similar weaker result) not assuming transitivity.

\begin{theorem}\label{thm_product}
Let $M$ be a closed $2n$-dimensional manifold. If $E$ is the total space of a smooth sphere bundle $\S^{2n}\to E\to M$  then $E$ does not support transitive Anosov diffeomorphisms. Moreover, if all Betti numbers of $M$ are less than or equal to one then $E$ does not support Anosov diffeomorphisms. 
\end{theorem}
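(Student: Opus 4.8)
The plan is to reduce everything to a single linear-algebra obstruction living in the middle cohomology $H^{2n}(E;\R)$. First I would pin down the topology of $E$ via the Gysin sequence of $\S^{2n}\to E\xrightarrow{\pi} M$. The crucial point is a dimension count: the Euler class of the bundle lies in $H^{2n+1}(M)$, which vanishes because $\dim M=2n$ (indeed it vanishes integrally, since $H^{2n+1}(M;\Z)=0$). Hence the Gysin sequence splits over $\Z$ and $H^{\ast}(E)\cong H^{\ast}(M)\oplus H^{\ast-2n}(M)$, with a fiber class $\theta\in H^{2n}(E)$ restricting to a generator on each fiber and satisfying $\theta^2=0$ (odd-rank vector bundle, so $e$ is $2$-torsion, hence $0$ over our coefficients). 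The key structural observation is that the \emph{middle} group is forced to be two-dimensional: since $M$ is connected and oriented, $H^{2n}(M;\R)=\R[\omega_M]$ and $H^0(M;\R)=\R$, so $H^{2n}(E;\R)=\R\theta\oplus\R\,\pi^\ast[\omega_M]$ regardless of the size of the other Betti numbers of $M$. Under the cup product this is a \emph{hyperbolic plane}: both generators are isotropic ($\theta^2=0$ and $\pi^\ast[\omega_M]^2=\pi^\ast([\omega_M]^2)=0$ as $H^{4n}(M)=0$), while $\theta\cup\pi^\ast[\omega_M]$ generates $H^{4n}(E;\R)$. Moreover $\theta$ and $\pi^\ast[\omega_M]$ are \emph{integral} classes, so the two isotropic lines of this plane are defined over $\Q$.

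Next I would isolate the endgame as a clean lemma independent of dynamics: if $g$ is any homeomorphism of $E$ whose action $g^\ast$ on $H^{2n}(E;\R)$ has spectral radius $>1$, we reach a contradiction. Indeed $g^\ast$ preserves the cup product up to $\deg g=\pm1$, so on the hyperbolic plane $H^{2n}(E)$ an eigenvalue $\lambda$ with $|\lambda|>1$ must have an isotropic eigenvector (from $\lambda^2\langle v,v\rangle=\pm\langle v,v\rangle$ with $\lambda^2\neq\pm1$), hence the eigenvector lies on one of the two \emph{rational} isotropic lines $\R\theta$ or $\R\,\pi^\ast[\omega_M]$. Since $g^\ast$ acts on $H^{2n}(E;\Z)/\mathrm{tors}\cong\Z^2$ as an element of $GL(2,\Z)$, this forces $\lambda$ to be a rational, hence integral, eigenvalue dividing $\det(g^\ast|_{H^{2n}})=\pm1$; thus $\lambda=\pm1$, contradicting $|\lambda|>1$. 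The entire problem therefore becomes: \emph{produce an eigenvalue of modulus $>1$ for $f^\ast$ in the middle degree $2n$.}

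To produce a modulus-$>1$ eigenvalue \emph{somewhere} in $H^\ast(E;\R)$ I would use the Lefschetz fixed-point formula, which needs no transitivity. After replacing $f$ by a power so that $E^u$ is oriented and its orientation is preserved, every fixed point of $f^m$ is nondegenerate with index exactly $(-1)^{\dim E^u}$, so $L(f^m)=(-1)^{\dim E^u}\#\mathrm{Fix}(f^m)$; as $\#\mathrm{Fix}(f^m)$ grows like $e^{h_{\mathrm{top}}(f)m}$ with $h_{\mathrm{top}}(f)>0$ for any Anosov diffeomorphism, we get $\mathrm{sp}\big(f^\ast|_{H^\ast(E;\R)}\big)=e^{h_{\mathrm{top}}(f)}>1$. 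This already settles the second statement. When all Betti numbers of $M$ are at most one, the doubling formula gives $b_i(E)\le 1$ for every $i\neq 2n$ and $b_{2n}(E)=2$; hence $f^\ast$ acts by $\pm1$ in every degree except $2n$, so the modulus-$>1$ eigenvalue is forced into $H^{2n}(E)$, and the lemma of the previous paragraph finishes the proof with no transitivity hypothesis.

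The main obstacle is the first statement, where $M$ may have large Betti numbers and the hyperbolic eigenvalue need not localize automatically. Here Ruelle--Sullivan theory places the top eigenvalue $e^{h_{\mathrm{top}}(f)}$ of $f^\ast$ in degree $u=\dim E^u$ (the cohomology class of the stable foliation cycle), and there is no purely homological reason that $u=2n$; for example the condition $b_u(E),b_{4n-u}(E)\ge 2$ that transitivity does force can hold for many $u\neq 2n$. The crux is to use transitivity to push this hyperbolicity into the middle degree --- either by proving $\dim E^u=2n$ outright, or by propagating the eigenvalue into $H^{2n}(E)$ through cup products (if $\eta\in H^u(E)$ is the stable-foliation eigenvector and one can find a class $x\in H^{2n-u}(E)$ with $f^\ast$-eigenvalue of modulus $\ge 1$ and $\eta\cup x\neq0$, then $\eta\cup x\in H^{2n}(E)$ has modulus $>1$), or by showing the homological spectral radii $\mathrm{sp}(f^\ast|_{H^i})$ are symmetric and unimodal about $i=2n$ so that the middle realizes the maximum $e^{h_{\mathrm{top}}(f)}$. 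Each route relies on the positivity and essential uniqueness of the unstable foliation cycle provided by transitivity, and the delicacy is real: the constraint $\mathrm{sp}(f^\ast)=e^{h_{\mathrm{top}}(f)}$ forces $\eta\cup\eta=0$, so the naive cup-product propagation can degenerate and must be handled with care. This localization-to-the-middle step is where I expect the essential difficulty to lie.
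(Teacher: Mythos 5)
Your Gysin-sequence setup, the identification of $H^{2n}(E;\R)$ as a rank-two group carrying a unimodular pairing with rational isotropic lines, and the ``endgame lemma'' that $f^*$ can have no eigenvalue of modulus $>1$ on $H^{2n}(E)$ all match the paper, and your argument for the \emph{second} statement (all $b_i(M)\le 1$) is complete and essentially the paper's. But the \emph{first} statement --- the transitive case with arbitrary Betti numbers --- is exactly where your proof stops. You correctly observe that nothing forces the large eigenvalue into degree $2n$, and you list three candidate strategies for ``localizing'' it there, none carried out; this is a genuine gap. Moreover the localization target is the wrong one to aim at: none of the three statements ($\dim E^u=2n$, cup-product propagation into the middle, unimodality of the spectral radii) is established or even plausibly true for a hypothetical transitive Anosov diffeomorphism on $E$, and the paper proves none of them.

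The paper's mechanism is different and does not determine in which degree the top eigenvalue lives. Having arranged $f^*y=y$ for the fiber class $y$, it proves (via the Thom isomorphism for the mapping cylinder $D$ of $E\to M$ and a cochain-level computation with the connecting map of the pair $(D,E)$) that $\smallsmile y\colon H^{k-2n}(E;\Z)\to H^{k}(E;\Z)$ is an isomorphism for $2n<k<4n$ which intertwines $f^*$ with $f^*$. Hence $Tr\big((f^*)^{-l}|_{H^k}\big)=Tr\big((f^*)^{-l}|_{H^{k-2n}}\big)$ for those $k$, and the Lefschetz count collapses to
$$|Fix(f^l)|=\Big|\,2\sum_{k=2n+1}^{4n-1}(-1)^kTr\big((f^*)^{-l}|_{H^k(E;\Q)}\big)+2+Tr(A^{-l})\Big|,$$
whose leading asymptotic coefficient is \emph{even}. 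This contradicts the Markov-partition asymptotic $|Fix(f^l)|=e^{lh_{top}(f)}+o(e^{lh_{top}(f)})$ for a transitive Anosov diffeomorphism, whose leading coefficient is $1$. So the contradiction is a parity statement about the multiplicity of the leading term, not a statement about which degree carries the big eigenvalue. Two smaller points: the reduction to oriented $E^u$ requires checking that the lift of $f$ to the orienting cover is still transitive (the paper's Lemma~\ref{lemma1}), which you use implicitly; and your justification of $\theta^2=0$ via ``odd-rank vector bundle'' does not apply to general $\S^{2n}$-bundles, though this is harmless since the argument goes through with $x\smallsmile x=q\omega$ arbitrary.
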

\begin{theorem}\label{thm_product2}
Let $M$ be a closed $n$-dimensional manifold. Assume that $m>n$. If $E$ is the total space of a smooth sphere bundle $\S^m\to E\to M$  then $E$ does not support transitive Anosov diffeomorphisms. Moreover, if $m$ is odd then $E$ does not support Anosov diffeomorphisms.
\end{theorem}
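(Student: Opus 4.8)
The plan is to combine the cohomological structure forced by the fiber bundle with the rigidity of the $f^{*}$-action coming from the Anosov dynamics. First I would compute $H^{*}(E)$. Since $\dim M=n<m$, the Euler class of the bundle lies in $H^{m+1}(M)=0$, so the Gysin sequence splits and Leray--Hirsch applies: there is a fiber class $\omega\in H^{m}(E)$ with $\omega^{2}=0$ (because $2m>m+n$) such that cup product $\cup\,\omega\colon H^{j}(E)\to H^{j+m}(E)$ is an isomorphism for $0\le j\le n$, while $H^{k}(E)=0$ for $n<k<m$ (the ``gap''). Working with $\Z$-coefficients modulo torsion, $H^{m}(E)\cong\Z$ is generated by $\omega$, so $f^{*}$ acts on it by a unit; hence $f^{*}\omega=c\,\omega$ with $c=\pm1$. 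Since $\pi_{1}(E)=\pi_{1}(M)$ (the fiber is simply connected for $m\ge 2$), any finite cover of $E$ pulls back from a cover of $M$ and is again a sphere bundle of the same type; so by passing to such a cover and to $f^{2}$ I may assume $E^{s},E^{u}$ are oriented, $f$ preserves these orientations, and $c=1$. Throughout, $f^{*}$ on $\bigoplus_k H^{k}(E;\Z)/\mathrm{tors}$ is an integral isomorphism, and because $\cup\,\omega$ conjugates $f^{*}|_{H^{j+m}}$ to $c\,f^{*}|_{H^{j}}=f^{*}|_{H^{j}}$, the eigenvalues of $f^{*}$ in degree $j+m$ coincide with those in degree $j$.

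For the transitive statement I would invoke the following structural input: for a transitive Anosov diffeomorphism the spectral radius of $f^{*}$ on $\bigoplus_k H^{k}(E;\R)$ equals $e^{h(f)}>1$, it is realized on $H^{u}(E;\R)$ ($u=\dim E^{u}$) by the Ruelle--Sullivan unstable class $\alpha$ (so $f^{*}\alpha=e^{h(f)}\alpha$), and the spectral radius on $H^{k}(E;\R)$ is \emph{strictly} smaller than $e^{h(f)}$ for every $k>u$. Replacing $f$ by $f^{-1}$ if necessary I may assume $u\le s=\dim E^{s}$, hence $2u\le u+s=m+n$ and $u\le (m+n)/2<m$; since $\alpha\ne 0$ lives in $H^{u}$ and the cohomology vanishes on $(n,m)$, this forces $u\le n$. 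Then $\alpha\cup\omega\in H^{u+m}(E)$ is nonzero with $f^{*}(\alpha\cup\omega)=e^{h(f)}(\alpha\cup\omega)$, so the spectral radius of $f^{*}$ on $H^{u+m}(E;\R)$ equals $e^{h(f)}$. But $u+m>u$, contradicting the strict localization at degree $u$. This rules out transitive Anosov diffeomorphisms for all $m>n$.

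For the ``moreover'' part ($m$ odd, transitivity dropped) I would argue by Lefschetz numbers. With $c=1$ and the isomorphism $\cup\,\omega$ we get $\operatorname{tr}\big((f^{*})^{N}|_{H^{j+m}}\big)=\operatorname{tr}\big((f^{*})^{N}|_{H^{j}}\big)$ for $0\le j\le n$, and since $m$ is odd the degree shift flips sign, $(-1)^{j+m}=-(-1)^{j}$. Therefore the high-degree terms cancel the low-degree terms and
\[
L(f^{N})=\sum_{k}(-1)^{k}\operatorname{tr}\big((f^{*})^{N}|_{H^{k}(E;\R)}\big)=0\qquad\text{for all }N.
\]
On the other hand, after the normalization above every periodic point of $f$ is hyperbolic, $E^{u}$ is oriented and $f$-orientation-preserving, so each fixed point of $f^{N}$ has index $(-1)^{u}$ for large $N$; since an Anosov diffeomorphism has periodic points (closing lemma), $L(f^{N})=(-1)^{u}\#\mathrm{Fix}(f^{N})\ne 0$ for suitable large $N$, a contradiction.

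The routine parts are the Gysin/Leray--Hirsch computation, the integrality giving $c=\pm1$, the cover-and-power normalization, and the Lefschetz index bookkeeping. The main obstacle is the structural input used in the transitive case: proving (or precisely citing) that for a transitive Anosov diffeomorphism the homological spectral radius $e^{h(f)}$ is attained at the unstable degree $u$ and is \emph{strictly} dominant in all degrees $k>u$. I expect to obtain this from the identity between topological entropy and the exponential growth rate of $u$-dimensional unstable volume, together with the inequality bounding the spectral radius of $f^{*}$ on $H^{k}$ by the growth rate of $k$-dimensional volume (a Yomdin/Newhouse-type estimate), the strictness coming from the absence of zero Lyapunov exponents for hyperbolic systems; verifying the strict inequality uniformly in $k>u$ is the delicate point.
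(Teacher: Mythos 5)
Your cohomology computation and your treatment of the odd-$m$ case are correct. Your mechanism --- a fiber class $\omega\in H^m(E)$ with $f^*\omega=\omega$ after normalization, so that $\cup\,\omega$ conjugates $f^*|_{H^j}$ to $f^*|_{H^{j+m}}$ for $0\le j\le n$ --- is a clean substitute for what the paper actually does, namely using a section $s\colon M\to E$ (which exists since $m>n$) and the partial Poincar\'e duality $\varphi\mapsto s_*[M]\smallfrown\varphi$ to identify the action in degree $j+m$ with the action in degree $j$. Both routes give $\mathrm{tr}\big((f^*)^{-l}|_{H^{j+m}}\big)=\mathrm{tr}\big((f^*)^{-l}|_{H^{j}}\big)$ and hence $L(f^l)=\sum_{j=0}^{n}\big((-1)^j+(-1)^{j+m}\big)\mathrm{tr}\big((f^*)^{-l}|_{H^j(E;\Q)}\big)$, which vanishes identically when $m$ is odd and contradicts the existence of periodic points of an Anosov diffeomorphism. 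That half of your argument is sound.

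The transitive case contains a genuine gap. Everything there rests on the assertion that for a transitive Anosov diffeomorphism the spectral radius $e^{h_{top}(f)}$ of $f^*$ is attained in degree $u=\dim E^u$ and is \emph{strictly} larger than the spectral radius of $f^*$ on $H^k$ for every $k>u$. This is not a citable theorem; it is a strong localized form of the entropy conjecture and is not known in this generality. Your proposed derivation does not close it: Yomdin-type estimates bound $\rho(f^*|_{H^k})$ by the exponential growth rate of $k$-dimensional volume, but for $k\ge u$ that growth rate is controlled by $\sup_\mu\int\log\det(df|_{E^u})\,d\mu$ plus the largest stable exponents, and the first quantity in general \emph{strictly exceeds} $h_{top}(f)$ (equality forces the measure of maximal entropy to coincide with the SRB measure), so appending a few weakly contracting stable directions need not bring the bound below $e^{h_{top}(f)}$; hyperbolicity alone does not give the strictness you need. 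Indeed your own construction $\alpha\mapsto\alpha\cup\omega$ produces an eigenvector of eigenvalue $e^{h_{top}(f)}$ in degree $u+m$, which is evidence that strict localization is exactly the wrong thing to hope for here. The correct conclusion to draw from that doubling --- and it is the paper's argument, for which you already have every ingredient --- is that for $m$ even $L(f^l)=2\sum_{j=0}^{n}(-1)^j\mathrm{tr}\big((f^*)^{-l}|_{H^j(E;\Q)}\big)$, so $|Fix(f^l)|=2q\lambda^l+o(\lambda^l)$ with an \emph{even} leading coefficient, whereas the Markov-partition asymptotics for a transitive Anosov diffeomorphism give $|Fix(f^l)|=e^{lh_{top}(f)}+o(e^{lh_{top}(f)})$, i.e. leading coefficient exactly one. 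Replace your spectral-localization step by this parity-of-the-leading-coefficient contradiction and the proof is complete.
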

\begin{remark}
The sphere bundles we consider above are general sphere bundles with structure group $\textup{Diff}(\S^m)$, not only the ones associated to vector bundles.
\end{remark}
In particular we obtain a non-existence result for the product $M\times \S^m$.
\begin{cor}\label{cor_product}
Let $M$ be a closed $n$-dimensional manifold. If $m>n$ then the product $M\times \S^{m}$ does not support transitive Anosov diffeomorphisms. Moreover, if $m$ is odd then $M\times \S^{m}$ does not support Anosov diffeomorphisms.
\end{cor}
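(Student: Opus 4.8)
The plan is to obtain this as an immediate specialization of Theorem~\ref{thm_product2} to the trivial sphere bundle. The key observation is that the projection onto the first factor $\pi\colon M\times\S^m\to M$ exhibits $M\times\S^m$ as the total space of a smooth fiber bundle with fiber $\S^m$. Since this bundle is a product, its structure group reduces to the identity and in particular lies inside $\textup{Diff}(\S^m)$, so $M\times\S^m$ is a smooth sphere bundle over $M$ in exactly the sense required by Theorem~\ref{thm_product2}.

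With the identification $E=M\times\S^m$, the hypothesis $m>n=\dim M$ matches the standing assumption of Theorem~\ref{thm_product2}. Its first conclusion then yields that $E$ supports no transitive Anosov diffeomorphism, which is the first assertion of the corollary. For the ``moreover'' part I would add the hypothesis that $m$ is odd and invoke the second conclusion of the theorem, obtaining that $E$ supports no Anosov diffeomorphism whatsoever.

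I do not expect a genuine obstacle here: the corollary is literally the case of a product bundle, and the only thing to check is the entirely routine fact that the Cartesian product $M\times\S^m$ is the total space of the trivial $\S^m$-bundle satisfying the dimension bound $m>n$. All the real content therefore resides in Theorem~\ref{thm_product2}, and the corollary follows by direct application.
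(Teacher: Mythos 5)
Your proposal is correct and matches the paper exactly: the corollary is stated as an immediate specialization of Theorem~\ref{thm_product2} to the trivial bundle $M\times\S^m\to M$, and the paper gives no further argument beyond this observation.
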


Under a further restriction on dimension this corollary generalizes to a different bundle setting.
\begin{theorem}\label{thm_bundle}
Let $M$ be a closed $n$-dimensional manifold. Assume that $m>n+1$. If $E$ is the total space of a smooth fiber bundle $M\to E\to \S^{m}$ then $E$ does not support transitive Anosov diffeomorphisms. Moreover, if $m$ is odd then $E$ does not support Anosov diffeomorphisms.
\end{theorem}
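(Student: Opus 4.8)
The plan is to show that, under the hypothesis $m>n+1$, the rational cohomology ring of $E$ carries exactly the features of a product that the Anosov obstruction needs, and then to run that obstruction. First I would compute $H^*(E;\Q)$ from the Wang exact sequence of the fibration $M\to E\to\S^m$. Its only differential is a derivation $\Theta\colon H^q(M;\Q)\to H^{q-m+1}(M;\Q)$; since $\dim M=n$ and $m>n+1$, whenever $H^q(M)\neq 0$ the target sits in negative degree $q-m+1\le n-m+1<0$, so $\Theta\equiv 0$. Hence $H^q(E;\Q)\cong H^q(M;\Q)\oplus H^{q-m}(M;\Q)$, and since $m>n$ the two summands are never simultaneously nonzero, so $H^*(E;\Q)$ is concentrated in the two blocks $[0,n]$ and $[m,n+m]$ with a genuine gap in degrees $n+1,\dots,m-1$. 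Let $\xi\in H^m(E;\Q)$ be the pullback of a generator of $H^m(\S^m;\Q)$. Then $\xi^2=\pi^*(\sigma^2)=0$, and collapse of the spectral sequence makes $H^*(E;\Q)$ a free module over $\Q[\xi]/(\xi^2)$; concretely $\cup\,\xi\colon H^j(E)\to H^{j+m}(E)$ is an isomorphism of the low block onto the high block. This is insensitive to any twisting of the cup product, so only the additive collapse is used.

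Now let $f$ be Anosov, set $d=n+m$, $u=\dim E^u$, $s=\dim E^s=d-u$. Since $H^m(E;\Z)/\text{torsion}\cong\Z$ and $f^*$ is a lattice automorphism, $f^*\xi=c\,\xi$ with $c=\pm 1$ (no orientability hypothesis is needed). The ring relation $f^*(\xi\cup\alpha)=c\,\xi\cup f^*\alpha$ then exhibits $f^*|_{H^{j+m}}$ as conjugate, via $\cup\,\xi$, to $c\,f^*|_{H^j}$, so $f^*$ has identical eigenvalue moduli in degrees $j$ and $j+m$. In short, the modulus spectrum of $f^*$ is $m$-periodic across the gap.

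For the transitive case I would invoke Ruelle--Sullivan theory: for a transitive Anosov diffeomorphism the spectral radius of $f^*$ on $\bigoplus_k H^k(E;\R)$ equals $e^{h(f)}>1$ and is attained in the single degree $u$, strictly larger than in every other degree. This is immediately incompatible with the $m$-periodicity. Indeed $H^u\neq 0$ forces $u\in[0,n]\cup[m,d]$; if $u\in[0,n]$ then $H^{u+m}=\xi\cup H^u\neq 0$, while if $u\in[m,d]$ then $H^{u-m}\neq 0$ with $H^u=\xi\cup H^{u-m}$, and in either case that neighbouring degree carries the same spectral radius $e^{h}$. Since $u\pm m\neq u$, a second degree attains the maximal modulus, contradicting strict domination. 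Hence $E$ supports no transitive Anosov diffeomorphism.

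The non-transitive statement, under the extra assumption that $m$ is odd, is the delicate part, since strict domination is not available without transitivity. Here $m$ odd gives $\chi(E)=\chi(M)\,\chi(\S^m)=0$, and I would complete the argument as for the corresponding clause of Corollary~\ref{cor_product}: one replaces strict domination by the weaker fact that the unstable foliation still yields a nonzero class of modulus $>1$ in degree $u$ (from an invariant transverse measure supported on an attractor), and then uses the parity forced by $\chi(E)=0$ together with the $c=\pm 1$ periodicity to preclude the cancellation of fixed-point indices that a merely-nontransitive map might otherwise exploit. Supplying a transitivity-free substitute for the strict spectral gap is the main obstacle of the theorem; everything else reduces, through the computation of the first paragraph, to the product case, and it is precisely the hypothesis $m>n+1$, which forces $\Theta\equiv 0$ and hence both the gap and the free $\Q[\xi]/(\xi^2)$-module structure, that lets the twisted bundle be treated on the same footing as $M\times\S^m$.
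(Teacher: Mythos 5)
Your cohomological setup is sound, and it reaches the key structural fact by a different route than the paper: you get the degree-$m$ periodicity of $f^*$ from the multiplicative collapse of the Serre/Wang spectral sequence and the isomorphism $\smallsmile\,\xi\colon H^j(E;\Q)\to H^{j+m}(E;\Q)$, whereas the paper gets the same periodicity of traces by pairing $k$ with $m+n-k$ via Poincar\'e duality on $E$ and pairing $k$ with $n-k$ via the transferred duality $\varphi\mapsto i_*[M]\smallfrown\varphi$ coming from the fiber inclusion. Either route is fine. The problem is the dynamical endgame. For the transitive case you invoke ``Ruelle--Sullivan theory'' in the form: the spectral radius of $f^*$ on $\bigoplus_k H^k$ equals $e^{h}$ and is attained \emph{only} in degree $u=\dim E^u$, strictly dominating every other degree. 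That is not what Ruelle--Sullivan proves (Theorem~\ref{tmRS} only produces one eigenvalue of modulus $e^{h}$ in degree $\dim E^u$ and its dual partner); even the equality of the global spectral radius with $e^h$ requires the entropy inequality, and the strict single-degree domination is not a known theorem for general transitive Anosov diffeomorphisms. Your contradiction rests entirely on this unproven strictness. For the non-transitive, $m$ odd case you explicitly do not supply an argument: the appeal to $\chi(E)=0$ and to ``precluding cancellation of indices'' is not a proof, and you acknowledge the gap yourself.

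The fix is already in your hands and is exactly what the paper does: feed your periodicity into the Lefschetz formula rather than into a spectral-radius comparison. After passing to orientation covers (so that $E^u$ is orientable and all fixed-point indices of $f^l$ agree, which is needed to write $|Fix(f^l)|=|\Lambda(f^l)|$ --- a point you dismiss too quickly when you say no orientability hypothesis is needed) and to $f^2$ so that $c=1$, your relation $Tr\bigl(f^{*(j+m)}\bigr)^{-l}=Tr\bigl(f^{*j}\bigr)^{-l}$ gives
$$
|Fix(f^l)|=\Bigl|\sum_{j=0}^{n}\bigl((-1)^j+(-1)^{j+m}\bigr)Tr\bigl((f^{*j})^{-l}\bigr)\Bigr|.
$$
If $m$ is odd this vanishes identically, contradicting the existence of periodic points for any Anosov diffeomorphism; if $m$ is even the leading coefficient of the exponential asymptotics is even, contradicting the coefficient $1$ in $|Fix(f^l)|=e^{lh_{top}(f)}+o(e^{lh_{top}(f)})$ for transitive $f$. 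As written, your proposal establishes the right cohomological periodicity but does not close either half of the theorem.
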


\begin{theorem}\label{thm_spheres}
Let $M$ be a finite product of spheres at least one of which is even-dimensional. Then $M$ does not support  transitive Anosov diffeomorphisms. 
\end{theorem}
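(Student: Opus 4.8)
The plan is to combine a reduction to the sphere-bundle theorems already established with a direct analysis of the action of a putative Anosov diffeomorphism on the exterior-algebra cohomology ring. Write $M=\S^{d_1}\times\cdots\times\S^{d_k}$ with $d_1\le\cdots\le d_k$ and $D=\sum_i d_i=\dim M$, and suppose for contradiction that $f$ is a transitive Anosov diffeomorphism of $M$; after replacing $f$ by an iterate and, if necessary, passing to a finite orientation cover (which for a product of spheres is again such a product, since finite covers of tori are tori and the simply connected factors contribute no $H^1(\cdot;\Z/2)$), we may assume that $E^u$, $E^s$ and their orientations are $f$-invariant. The first step is to dispose of the cases in which one sphere is large: if $d_k>D-d_k$, then viewing $M$ as the trivial bundle $\S^{d_k}\to M\to\S^{d_1}\times\cdots\times\S^{d_{k-1}}$ and applying Corollary~\ref{cor_product} already gives a contradiction, while if some $d_j=D/2$ is even we apply Theorem~\ref{thm_product}. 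We may therefore concentrate on products in which no single sphere is this large.

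For the remaining products I would argue cohomologically. Since $H^*(M;\R)=\Lambda(x_1,\dots,x_k)$ is the free graded-commutative algebra on generators $x_i$ of degree $d_i$ (with $x_i^2=0$), the ring automorphism $f^*$ is determined up to lower word-length terms by the linear map $\phi$ it induces on the indecomposables $Q=\bigoplus_i\R x_i$. Passing to the associated graded for the word-length filtration shows that the eigenvalues of $f^*$ on $H^*(M;\C)$ are exactly the products $\nu_S=\prod_{i\in S}\nu_i$ over $S\subseteq\{1,\dots,k\}$, where $\nu_1,\dots,\nu_k$ are the eigenvalues of $\phi$, the eigenvalue $\nu_S$ occurring in degree $\sum_{i\in S}d_i$. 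The key dynamical input—the same transitivity mechanism underlying Theorems~\ref{thm_product}--\ref{thm_bundle}—is that for a transitive Anosov diffeomorphism $f^*$ has a unique eigenvalue of maximal modulus, which is real, simple, larger than $1$, and lies in degree $u=\dim E^u$ (and dually a unique minimal-modulus eigenvalue in degree $s=\dim E^s$). Feeding this into the multiplicative description forces every generator eigenvalue off the unit circle: if some $|\nu_{i_0}|=1$ then, with $S^+=\{i:|\nu_i|>1\}$, the subsets $S^+$ and $S^+\cup\{i_0\}$ contribute two distinct eigenvalues of equal maximal modulus sitting in different degrees, contradicting uniqueness. Hence $|\nu_i|\neq1$ for all $i$ and $u=\sum_{|\nu_i|>1}d_i$.

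Integrality supplies the next constraint. Because $f^*$ preserves the integral cohomology lattice and is a ring map, it preserves the lattice of integral indecomposables degree by degree; restricting $\phi$ to the generators of a fixed dimension $d$ lands it in $GL(\#\{i:d_i=d\},\Z)$, so $\prod_{d_i=d}\nu_i=\pm1$ for every dimension $d$ that occurs. Combining this with the previous step, any sphere dimension occurring with multiplicity one would yield a single generator eigenvalue equal to $\pm1$, i.e. on the unit circle—a contradiction. This already rules out transitive Anosov diffeomorphisms on every product in which some sphere dimension is unrepeated, for instance on $\T^2\times\S^{2a}$ and, more to the point, on any product containing a solitary even sphere.

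The main obstacle is the residual case in which \emph{every} sphere dimension is repeated, which includes $(\S^{2a})^k$ and mixtures such as $\T^2\times\S^{2a}\times\S^{2a}$; here the unit-circle and determinant constraints are mutually consistent, so a purely spectral argument cannot conclude. My plan is to exploit the even factor through the characteristic classes of the invariant distribution: the Euler class $e(E^u)\in H^u(M;\R)$ is $f^*$-invariant, hence lies in the span of the (corrected) monomials $x_S$ with $\sum_{i\in S}d_i=u$ and $\nu_S=1$, and likewise for $e(E^s)$, while stable triviality of $T(\prod_i\S^{d_i})$ gives $e(E^u)\cup e(E^s)=e(TM)=\chi(M)\,[M]^*$. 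When all factors are even one has $\chi(M)\neq0$, so one wants the $\nu_S=1$ classes to be too sparse to pair to the top class. I expect this to be the hardest step, precisely because the exterior algebra admits many \emph{balanced} monomials with $\nu_S=1$: on $(\S^2)^4$, for instance, the choice $\nu=(2,2,\tfrac12,\tfrac12)$ is consistent with all the constraints above and the invariant middle-dimensional classes $x_1x_3$ and $x_2x_4$ do cup to the fundamental class. Thus the naive Euler pairing is insufficient, and the crux is to bring in a finer invariant of the even factor—a refined bookkeeping of which invariant classes can genuinely be realized as Euler classes together with the full relation $p(E^u)p(E^s)=1$ forced by stable triviality, or a fiberwise Gysin refinement replacing $\chi(M)$ itself, the latter being essential in the mixed even/odd case where $\chi(M)=0$.
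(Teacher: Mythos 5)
Your proposal does not close, and you say so yourself: the ``residual case'' in which every sphere dimension is repeated (e.g.\ $(\S^2)^4$, or $\T^2\times\S^{2a}\times\S^{2a}$) is exactly where the theorem's content lies, and your fallback via Euler classes of $E^u$ and $E^s$ is, as you admit, insufficient. The idea you are missing is purely algebraic and is the engine of the paper's proof (Lemma~\ref{lemma_even_id}): if $d_p$ is \emph{even}, the cup product is commutative in degree $d_p$, so writing $f^*x^p_q=\sum_i a_i x^p_i+(\text{decomposables})$ and imposing $(f^*x^p_q)^2=0$ forces all cross terms $2a_ia_j\,x^p_i\smallsmile x^p_j$ (which are nonzero basis elements, not cancelled by sign) to vanish; hence exactly one $a_i$ is nonzero, the decomposable part vanishes, and $f^*$ acts on the even-degree generators by a signed permutation, i.e.\ with finite order. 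This is far stronger than your determinant constraint $\prod_{d_i=d}\nu_i=\pm1$ and it kills your own counterexample $\nu=(2,2,\tfrac12,\tfrac12)$ on $(\S^2)^4$ outright: \emph{all} eigenvalues attached to even spheres are roots of unity, always. With this in hand the paper passes to a power of $f$ fixing every even generator, observes that each diagonal block $A(\vec\alpha)$ attached to an odd splitting then occurs exactly $2^e$ times in the total cohomology ($e\ge1$ being the number of even generators), and extracts from the Lefschetz formula an asymptotic $|Fix(f^{l_i})|=2^e|w_s|\lambda_s^{-l_i}+o(\lambda_s^{-l_i})$ whose leading coefficient is even --- contradicting the coefficient $1$ in $|Fix(f^l)|=e^{lh_{top}(f)}+o(e^{lh_{top}(f)})$ for transitive Anosov diffeomorphisms. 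No Euler-class or Pontrjagin-class input is needed.

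A secondary problem: your ``key dynamical input'' --- that $f^*$ on $H^*(M;\C)$ has a unique, simple, real eigenvalue of maximal modulus located in degree $\dim E^u$ --- is asserted, not proved, and it is not a standard fact. The transitivity asymptotic only controls the \emph{alternating} sum of traces, so eigenvalues of larger modulus could a priori occur in pairs of opposite parity and cancel; the Ruelle--Sullivan theorem gives an eigenvalue $e^{h_{top}(f)}$ in degree $\dim E^u$ but says nothing about uniqueness across degrees. The paper deliberately avoids needing any such statement by tracking exact integer multiplicities of the blocks and passing to a subsequence $l_i$ to handle equal-modulus eigenvalues; you would need to either prove your spectral claim or restructure the argument the way the paper does.
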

\begin{theorem}\label{thm_spheres2}
Let $M$ be a finite product of spheres. Assume that there exists an odd dimension $k$ such that $\S^k$ appears in the product $M$ exactly once. Then $M$ does not support Anosov diffeomorphisms.
\end{theorem}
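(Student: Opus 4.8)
The plan is to assume that $M=\S^{d_1}\times\cdots\times\S^{d_r}$ carries an Anosov diffeomorphism $f$ and to extract a contradiction from the induced automorphism $f^*$ of the rational cohomology ring $H^*(M;\Q)\cong\Lambda(x_1,\dots,x_r)$, the exterior algebra on generators $x_i$ of degree $d_i$. The only dynamical inputs I intend to use are the fixed point index of a hyperbolic periodic point together with the fact that an Anosov diffeomorphism of a compact manifold always has periodic points; in particular I will not use transitivity, which is consistent with the statement. Throughout, $\S^k$ denotes the distinguished odd-dimensional sphere occurring exactly once.

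First I would reduce to the situation where the unstable distribution $E^u$ is orientable and $f$ preserves its orientation. Orientability is obstructed by $w_1(E^u)\in H^1(M;\Z/2)$; since $\pi_1(M)$ is free abelian (generated by the circle factors), the associated double cover $\hat M\to M$ merely doubles one circle factor and is therefore again diffeomorphic to a product of the same spheres, with the same multiplicities, so that $\S^k$ still occurs exactly once. As $f$ preserves $E^u$ it lifts to an Anosov diffeomorphism $\hat f$ of $\hat M$, and after replacing $\hat f$ by $\hat f^2$ I may assume it preserves a global orientation of the pulled-back bundle. Renaming, I thus assume $E^u$ is oriented and $f$-orientation-preserving while retaining all hypotheses on $M$.

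The cohomological heart is a Lefschetz computation. Since $f^*$ is a filtered ring automorphism (it preserves the powers of the augmentation ideal), its eigenvalues on $H^*(M;\R)$ are exactly the subset products $\prod_{i\in S}\mu_i$, where $\mu_1,\dots,\mu_r$ are the eigenvalues of the map induced on the module of indecomposables $QH^*$, with $\mu_i$ lying in degree $d_i$. Grouping the subsets by cardinality and degree yields the factorization
\[
L(f^m)=\prod_{i=1}^r\bigl(1+(-1)^{d_i}\mu_i^{\,m}\bigr).
\]
Because $\S^k$ occurs once, $QH^k\cong\Z$ and the corresponding $\mu_k$ lies in $GL_1(\Z)$, hence $\mu_k=\pm1$. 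As $k$ is odd, the factor contributed by $x_k$ is $(1-\mu_k^{\,m})$, which vanishes for every even $m$; therefore $L(f^m)=0$ for all even $m$. This is precisely where oddness and multiplicity one are both indispensable: an even generator would contribute the non-vanishing factor $(1+\mu_k^m)$, and higher multiplicity would free $\mu_k$ from $\{\pm1\}$.

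Finally I would play this against the index formula. For a hyperbolic fixed point $p$ of $f^m$ one has $\mathrm{ind}(p)=(-1)^{u}\,\mathrm{sign}\det(df^m|_{E^u_p})$ with $u=\dim E^u$, and under the orientation normalization above every such sign is $+1$, so $L(f^m)=(-1)^u\,\#\mathrm{Fix}(f^m)$. Since $f$ has a periodic orbit, of period $\ell$ say, that orbit is fixed by $f^{2\ell}$, whence $\#\mathrm{Fix}(f^{2\ell})\ge 1$ and $L(f^{2\ell})\ne0$; but $2\ell$ is even, contradicting $L(f^m)=0$ for even $m$. The only genuine obstacle I foresee is the orientation bookkeeping for $E^u$ — verifying that the double cover does not leave the class of products of spheres and that a fixed even power preserves the orientation — together with the integrality statement $\mu_k=\pm1$ at the level of indecomposables; once these are secured the contradiction is immediate.
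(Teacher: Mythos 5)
Your overall strategy---pass to a cover to orient $E^u$, show the Lefschetz numbers vanish because of the distinguished odd generator, and contradict the existence of periodic points---is the same as the paper's, and your reduction and index bookkeeping are fine. The gap is in the cohomological heart. The claim that the eigenvalues of $f^*$ on $H^*(M;\R)$ are exactly the subset products $\prod_{i\in S}\mu_i$ of the eigenvalues on the indecomposables, and the resulting factorization $L(f^m)=\prod_i\bigl(1+(-1)^{d_i}\mu_i^m\bigr)$, are false as soon as an \emph{even}-dimensional sphere occurs with multiplicity at least two. Concretely, for the swap on $\S^2\times\S^2$ the map on $H^2$ has eigenvalues $\mu_1=1$, $\mu_2=-1$, but $f^*(x_1\smallsmile x_2)=x_2\smallsmile x_1=x_1\smallsmile x_2$, so the eigenvalue on $H^4$ is $+1$, not $\mu_1\mu_2=-1$; correspondingly $L(f)=1+0+1=2$, while your formula predicts $(1+\mu_1)(1+\mu_2)=0$. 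The failure is structural: for even generators the word-length-$\ell$ piece of the associated graded is the square-free part of a symmetric power, not an exterior power, and the induced map there does not have the subset products as its spectrum. This is exactly why the paper proves Lemma~\ref{lemma_even_id} (the relations $(f^*x^p_q)^2=0$ force the even blocks $A_p$ to be signed permutations, hence the identity after a further power) before it can write the diagonal blocks in the tensor-product form~(\ref{block}).

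What you actually need is weaker and is true: in a monomial basis ordered as in Lemma~\ref{lemma_upper_triangular}, $f^*$ is block-upper-triangular, and the diagonal block on the monomials containing $x_k$ equals $\mu_k$ times the block on the corresponding monomials not containing $x_k$---here one uses only that the degree-$k$ indecomposables have rank one, so the extra tensor factor is the $1\times 1$ matrix $(\mu_k)$, independently of how the even blocks behave. Since $k$ is odd, the two families enter the alternating sum with opposite signs, which gives $L(f^m)=(1-\mu_k^m)\cdot(\cdots)$ honestly; your endgame (a periodic orbit exists, take an even power, all indices equal $(-1)^{\dim E^u}$) then goes through. So the theorem survives, but you must replace the ``eigenvalues are subset products'' step by the filtration/associated-graded argument; as written, the central displayed identity is both unjustified and, in general, false.
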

\begin{cor}
The product of an $n$-dimensional torus $\mathbb T^n$ and a $k$-dimensional sphere $\S^k$ does not support transitive Anosov diffeomorphisms if $k\ge 2$. Moreover if $k$ is odd then $\mathbb T^n\times \S^k$ does not support Anosov diffeomorphisms.
 \end{cor}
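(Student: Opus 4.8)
The plan is to observe that the $n$-torus is itself a product of circles, $\mathbb T^n=(\S^1)^n$, so that $\mathbb T^n\times\S^k=(\S^1)^n\times\S^k$ is a finite product of spheres. Once this is noticed, both assertions follow immediately from Theorems \ref{thm_spheres} and \ref{thm_spheres2}; the only work is bookkeeping, namely tracking which parity of $k$ invokes which theorem.

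For the transitivity assertion I would assume $k\ge 2$ and split according to the parity of $k$. If $k$ is even, then the even-dimensional sphere $\S^k$ occurs among the factors, so Theorem \ref{thm_spheres} rules out transitive Anosov diffeomorphisms directly. If instead $k$ is odd, then necessarily $k\ge 3$, so $\S^k$ is distinct from the circle factors and occurs exactly once in the product; the odd dimension $k$ is thus realized by a unique sphere factor, and Theorem \ref{thm_spheres2} rules out all Anosov diffeomorphisms, in particular the transitive ones. This covers every $k\ge 2$.

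For the second assertion I would take $k$ odd (and, as above, $k\ge 2$, hence $k\ge 3$), so that $\S^k$ again appears exactly once among the factors $\S^1,\dots,\S^1,\S^k$; Theorem \ref{thm_spheres2} then applies verbatim to exclude every Anosov diffeomorphism on $\mathbb T^n\times\S^k$. The single point requiring care --- and the closest thing to an obstacle --- is that one must not attempt to use a circle factor as the uniquely-occurring odd-dimensional sphere: for $n\ge 2$ the dimension-one sphere appears $n$ times, so it fails the ``exactly once'' hypothesis of Theorem \ref{thm_spheres2}. It is precisely the presence of the large sphere $\S^k$ with $k\ge 3$ that supplies the required factor, which also explains why the hypothesis cannot be weakened to $k=1$: in that degenerate case $\mathbb T^n\times\S^1=\mathbb T^{n+1}$ is a torus and does support Anosov diffeomorphisms.
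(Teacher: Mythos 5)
Your proof is correct and is exactly the argument the paper intends (the corollary is stated without proof, as an immediate consequence of Theorems~\ref{thm_spheres} and~\ref{thm_spheres2} applied to $\mathbb T^n\times\S^k=(\S^1)^n\times\S^k$). The parity split, the observation that $\S^k$ with $k\ge 3$ odd occurs exactly once, and the remark about why $k=1$ must be excluded are all accurate.
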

\begin{problem}
Prove that $\S^3\times\S^3$ does not support Anosov diffeomorphisms.
\end{problem}

\begin{theorem}\label{thm_euler}
Let $M$ be a compact simply connected manifold with non-zero Euler characteristic $\chi(M)$ and Betti numbers $b_i(M)\leq 2$, $i\geq 1$. Then $M$ does not support transitive Anosov diffeomorphisms. 
\end{theorem}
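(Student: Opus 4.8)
The plan is to argue by contradiction. Suppose $f$ is a transitive Anosov diffeomorphism of $M$, with $\dim E^u=u$, $\dim E^s=s$, $u+s=n=\dim M$. Since every closed odd-dimensional manifold has $\chi=0$ by Poincaré duality, the hypothesis $\chi(M)\neq0$ forces $n=2d$ to be even. I would then invoke the standard homological consequences of transitivity, coming from the Ruelle--Sullivan current of the unstable foliation together with the entropy formula for Anosov systems: $f_*$ on $H_*(M;\R)$ has spectral radius $\rho=e^{h}>1$, this value is realised by a positive eigenvalue in degree $u$, and it is the unique eigenvalue of maximal modulus; applying this to $f^{-1}$ yields a unique minimal-modulus eigenvalue $\rho^{-1}$ sitting in degree $s$. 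Finally, since $f$ is a homeomorphism, $f_*$ is an automorphism of each lattice $H_i(M;\Z)/\mathrm{torsion}$, so $\det(f_*|_{H_i})=\pm1$ and the moduli of the eigenvalues in any fixed degree multiply to $1$.

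These inputs already pin down the picture. In degree $u$ one eigenvalue has modulus $\rho>1$ while all the moduli multiply to $1$, so degree $u$ cannot be one-dimensional; by the hypothesis $b_i\leq2$ we get $b_u=2$ and a companion eigenvalue of modulus $\rho^{-1}$. But $\rho^{-1}$ is the unique minimal eigenvalue and lives in degree $s$, whence $u=s=d$. The two eigenvalues of the integral matrix $f_*|_{H_d}$ therefore have moduli $\rho$ and $\rho^{-1}$; if $\rho$ were rational it would be an integer $\geq2$ and its companion $\pm\rho^{-1}$ a non-integral rational root of a monic integer polynomial, which is impossible. Hence $\rho$ is a real quadratic irrationality and the two eigenlines in $H_d(M;\R)$ form an irrational pair, conjugate over $\Q$.

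To reach a contradiction I would split on the parity of $d$. Reducing modulo $2$ and using $b_i=b_{n-i}$, Poincaré duality gives $\chi(M)\equiv b_d\pmod2$; so when $\chi(M)$ is odd, $b_d$ is odd, contradicting $b_d=2$. When $\chi(M)$ is even I pass to cohomology and use the cup product. If $d$ is even, the middle pairing on $H^d(M;\Z)/\mathrm{torsion}\cong\Z^2$ is symmetric and unimodular, and it is indefinite: for an eigenclass $\omega$ one has $\rho^2\langle\omega,\omega\rangle=\langle f^*\omega,f^*\omega\rangle=\deg(f)\langle\omega,\omega\rangle$, so $\langle\omega,\omega\rangle=0$, and the two eigenclasses are independent isotropic vectors. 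A rank-two indefinite unimodular form is isomorphic to $\langle1\rangle\oplus\langle-1\rangle$ or to the hyperbolic plane, and in either case its isotropic lines are rational --- contradicting the irrationality of the eigenlines.

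The main obstacle is the remaining case, $d$ odd with $\chi(M)$ even, which is genuinely non-vacuous: for example $(\S^3\times\S^3)\#(\S^2\times\S^4)$ is simply connected with $\chi=2$, all $b_i\leq2$, and $b_3=2$. Here the middle cup form is skew-symmetric, every vector is isotropic, and $f^*|_{H^d}$ is only constrained to lie in $\mathrm{SL}(2,\Z)$, which happily carries the irrational eigenvalues $\rho,\rho^{-1}$; the middle dimension alone gives nothing. To handle it I would exploit the multiplicative structure globally: feeding the leading eigenclass $\omega$ into $\omega\cup-\colon H^j\to H^{d+j}$ kills every eigenclass of modulus $>1$ (else one produces an eigenclass of modulus $>\rho$), and dually the minimal eigenclass annihilates the expanded part, so the whole ring is squeezed; I expect that combining this with $\chi(M)\neq0$ --- which forces cohomology to persist in enough degrees --- and with the strong rational-homotopy restrictions imposed by $b_i\leq2$ on a simply connected space will close the case. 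Making this last, ring-theoretic step precise is the crux of the proof, and the same odd-dimensional difficulty is what leaves the companion question about $\S^3\times\S^3$ open.
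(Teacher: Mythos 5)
Your proposal is incomplete, and you say so yourself: the case where the middle degree $d$ is odd and $\chi(M)$ is even is left as an unproved ``crux,'' and your own example $(\S^3\times\S^3)\#(\S^2\times\S^4)$ shows that this case is non-vacuous, so the theorem has not been proved. There is also an unjustified step earlier in the argument: the reduction to $u=s=d$ rests on the assertion that $e^{h_{top}(f)}$ is the \emph{unique} eigenvalue of maximal modulus of $f_*$ and that it occurs only in degree $u$ (and dually for the minimal modulus). Theorem~\ref{tmRS} only produces \emph{some} eigenvalue of modulus $e^{\pm h_{top}(f)}$ in degrees $\dim E^u$ and $\dim E^s$; neither uniqueness nor localization to a single degree follows from the Lefschetz asymptotics, since cancellations between degrees of opposite parity can hide further eigenvalues of extremal modulus. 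So the conclusion that both extreme eigenvalues sit in the same middle group $H_d$, on which your whole case analysis depends, is not established.

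The idea you are missing is the one the paper actually uses: the invariant distributions themselves carry characteristic classes. On a simply connected manifold $E^s$ and $E^u$ are orientable, and multiplicativity of the Euler class gives $e(E^s)\smallsmile e(E^u)=e(TM)$, which is non-zero in $H^n(M;\R)$ because $\chi(M)\neq 0$; hence $e(E^s)$ is a \emph{non-zero} class in $H^k(M;\R)$ with $k=\dim E^s$. After passing to $f^2$ so that $df$ preserves the orientation of $E^s$, naturality gives $f^*e(E^s)=e(E^s)$, so $f^*|_{H^k(M;\R)}$ has the eigenvalue $1$. Theorem~\ref{tmRS} supplies an eigenvalue of modulus $e^{-h_{top}(f)}<1$ in the \emph{same} degree $k$, and unimodularity of $f^*$ on $H^k(M;\Z)/\mathrm{Tor}$ then forces a third eigenvalue of modulus greater than $1$. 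Three eigenvalues of pairwise distinct moduli give $b_k(M)\geq 3$, contradicting $b_i(M)\leq 2$; this is exactly Theorem~\ref{tmMain} and Corollary~\ref{cor2}. Note that this route never needs to locate anything in the middle dimension, requires no parity case analysis, and uses the Ruelle--Sullivan class only in the single degree $k$ where it is guaranteed to live.
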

\begin{theorem}\label{thm_simply_connected}
Let $M$ be a closed $(2n-1)$-connected $4n$-dimensional manifold. If the $2n$-th Betti number is less than 5 then $M$ does not support Anosov diffeomorphisms.
\end{theorem}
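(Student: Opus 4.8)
The plan is to reduce everything to the action of $f$ on middle-dimensional cohomology together with its intersection form. First I would record the homological structure: since $M$ is closed, $(2n-1)$-connected and $4n$-dimensional, the Hurewicz theorem and Poincar\'e duality concentrate the rational (co)homology in degrees $0$, $2n$ and $4n$, with $b_0=b_{4n}=1$ and $b_{2n}=b\le 4$. As $2n$ is even, the cup product furnishes a nondegenerate \emph{symmetric} pairing $Q$ on $H^{2n}(M;\R)\cong\R^b$, which over $\Z$ is unimodular. Any diffeomorphism $f$ then induces $A:=f^*|_{H^{2n}}\in GL(b,\Z)$ satisfying $A^\top Q A=\epsilon Q$, where $\epsilon=\deg f=\pm1$; in particular the eigenvalues of $A$ are grouped into reciprocal (or anti-reciprocal) pairs.

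Next I would bring in the dynamics to force the relevant spectral picture. For a transitive Anosov $f$ with $\dim E^u=u$, the spectral radius of $f_*$ on $H_*(M;\R)$ equals $e^{h_{top}(f)}>1$ and is realized by a simple real positive eigenvalue in degree $u$ (the Ruelle--Sullivan unstable class), with the symmetric statement in degree $s=\dim E^s$ for $f^{-1}$. Since the only degree carrying classes capable of exponential growth is $2n$ (degrees $0$ and $4n$ give eigenvalues of modulus $1$), this forces $u=s=2n$ and $\mathrm{sp}(A)=e^{h}>1$. For the full, non-transitive conclusion I would instead run a Lefschetz argument: all periodic points of an Anosov system are hyperbolic with index $\pm1$, so $L(f^k)=\sum_{\mathrm{Fix}(f^k)}\mathrm{ind}$, and after passing to an iterate and to the orientation cover of $E^u$ to stabilize signs, $|L(f^k)|$ grows exponentially, again forcing $\mathrm{sp}(A)>1$. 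Either way the leading and co-leading eigenvectors $v_u,v_s$ are $Q$-isotropic (from $Q(Av,Av)=Q(v,v)$ applied to each), and the geometric fact that the transverse middle-dimensional stable and unstable classes pair nontrivially gives $Q(v_u,v_s)\neq 0$; since $Q(Av_u,Av_s)=Q(v_u,v_s)$ this also pins $\epsilon=1$, so $A\in O(Q;\Z)$ with $\mathrm{sp}(A)>1$.

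This reduces Theorem \ref{thm_simply_connected} to a statement about integral forms: a unimodular symmetric form of rank $b\le 4$ admits no integral isometry whose leading eigenvalue is a simple real unit larger than $1$ with isotropic eigenvector (together with its reciprocal partner). The small cases are clean. For $b\in\{0,1\}$ one gets $\mathrm{sp}(A)=1$, a contradiction. For $b=2$ the hyperbolic pair $\langle v_u,v_s\rangle$ already fills the space, so $Q$ is the rank-$2$ indefinite unimodular form; its integral orthogonal group is finite, so no isometry of spectral radius $>1$ exists. The genuine difficulty is $b=3$ and, above all, $b=4$: the signature constraints permit the indefinite forms of type $(2,1)$ and $(2,2)$, and these \emph{do} admit integral isometries with $\mathrm{sp}>1$ at the level of bare linear algebra (for instance the split signature-$(2,2)$ form receives hyperbolic elements through the isogeny $SL(2,\R)\times SL(2,\R)\to SO(2,2)$). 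Hence preservation of the intersection form alone cannot finish, and the argument must feed in the finer dynamical rigidity — the simplicity and positivity of the Ruelle--Sullivan leading class and the restriction this places on the remaining spectrum.

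The main obstacle is precisely this rank-$3$ and rank-$4$ analysis, and I expect it to be where the threshold $b<5$ is actually produced. I would attack it in one of two ways. One option is to exploit the positivity and uniqueness of the dominant class to forbid the \emph{secondary} expanding eigenvalues that a split $(2,2)$ isometry inevitably carries, thereby contradicting the strict dominance of $e^h$. The other, more in the spirit of the paper's title, is to use the higher rational homotopy of $M$: because $M$ is $(2n-1)$-connected, $f$ acts on $\pi_{4n-1}(M)\otimes\Q$, whose eigenvalues are the products $\lambda_i\lambda_j$ subject to the single quadratic relation $d(\,\cdot\,)=Q$ appearing in the minimal model. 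Tracking these products against the admissible spectrum of a rank-$\le 4$ unimodular isometry should yield the contradiction for $b=3,4$ and pinpoint why the obstruction disappears once $b\ge 5$. Establishing this arithmetic–homotopical incompatibility is the hard part of the proof.
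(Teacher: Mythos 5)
There is a genuine gap: you correctly set up the intersection form, the constraint $A^{\top}QA=\pm Q$, and the Lefschetz/Ruelle--Sullivan input forcing an eigenvalue off the unit circle, and you dispose of $b\le 2$; but you then explicitly leave the cases $b=3,4$ open, and the two strategies you sketch for them (simplicity of the dominant Ruelle--Sullivan class, or the action on $\pi_{4n-1}(M)\otimes\Q$ via the minimal model) are speculative and are not what closes the argument. The first would in any case require transitivity, which the theorem does not assume, and neither visibly rules out a split signature-$(2,2)$ isometry.

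The ingredient you are missing is the Euler class of the stable distribution. Since $M$ is $(2n-1)$-connected of dimension $4n$, one has $\chi(M)=2+b_{2n}\neq 0$, so $e(TM)\neq 0$; by the Whitney product formula $e(E^s)\smallsmile e(E^u)=e(TM)$, hence $e(E^s)\neq 0$, it lives in $H^{2n}(M;\R)$, and (after passing to an iterate so that $df$ preserves the orientation of $E^s$) naturality gives $f^*e(E^s)=e(E^s)$. Thus $A$ has a nonzero eigenvalue-$1$ eigenvector. The paper then splits $H^{2n}$ $Q$-orthogonally into the eigenvalue-$1$ subspace $V$ (of dimension $\ge 1$ by the above) and its complement $V^\perp$, on which the restriction $A'$ has no root-of-unity eigenvalues after a further iterate; since non-root-of-unity eigenvalues of an isometry of $Q$ pair off as $\lambda,\lambda^{-1}$ (and complex ones in conjugate pairs), $\dim V^\perp$ is even and nonzero, and the case $\dim V^\perp=2$ is excluded by directly enumerating $SO(Q';\Z)$ for the four rank-two unimodular symmetric forms --- all finite, exactly the computation you did for $b=2$, but applied to the hyperbolic summand rather than to all of $H^{2n}$. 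Hence $\dim V^\perp\ge 4$ and $b_{2n}\ge 4+1=5$. So the arithmetic of rank-$\le 4$ forms never has to be confronted head-on; the threshold $5$ comes from $4$ (minimal even rank of a genuinely hyperbolic unimodular summand) plus $1$ (the fixed Euler-class direction), and your proposal as written does not reach this conclusion.
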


\begin{exmcg} Consider the second cohomology group of $\S^2\times\S^2$. By the K\"unneth formula, $H^2(\S^2\times\S^2;\Z)$ is a free abelian group on two generators $x$ and $y$ that correspond to the first and the second factor. Also, by the K\"unneth formula, $x\smallsmile x=y\smallsmile y=0$ and $x\smallsmile y=\varepsilon$, where $\varepsilon$ is a generator of $H^4(\S^2\times\S^2;\Z)$. Consider a diffeomorphism $f\colon\S^2\times\S^2\to\S^2\times\S^2$ and the automorphism of $H^*(\S^2\times\S^2)$ induced by $f^2$. Clearly $(f^*)^2\varepsilon=\varepsilon$ and $(f^*)^2|_{H^2(\S^2\times\S^2;\Z)}$ is given by an integral matrix $\bigl(\begin{smallmatrix}
a & b\\
c & d
\end{smallmatrix}\bigr)$ whose determinant is one. Since $(f^*)^2$ respects the cup product we obtain the following conditions on $a, b, c$ and $d$
$$
ac=0,\; bd=0, \; ad+bc=1.
$$
It follows that $b=c=0$ and $a=d=\pm1$. Hence $f^4$ induces identity on cohomology. It follows by work of Quinn~\cite{Q} that $f^4$ is isotopic to identity. It also follows that $f$ is isotopic to a composition of $(x,y)\mapsto (y,x)$, $(x,y)\mapsto (-x,y)$ and $(x,y)\mapsto (x,-y)$.

It follows from the Lefschetz formula that an Anosov diffeomorphism cannot induce identity on cohomology. 
We will make a more extensive study of existence of Anosov diffeomorphisms on four-dimensional simply connected manifolds in a forthcoming paper.
\end{exmcg}
We remark that this argument does not work for $\S^3\times\S^3$ since the cup product anti-commutes in odd dimension. In fact, $\S^3\times\S^3$ does admit diffeomorphisms that are hyperbolic on third cohomology.

Absence of Anosov diffeomorphism on various manifolds was addressed in the literature:
\begin{enumerate}
\item 
Hirsch ~\cite{H} showed that certain manifolds with polycyclic fundamental group do not admit Anosov diffeomorphisms. In particular, he showed that mapping tori of hyperbolic toral automorphisms do not carry Anosov diffeomorphisms. 
\item
Shiraiwa ~\cite{Sh} noted that an Anosov diffeomorphism with orientable stable (or unstable) distribution cannot induce the identity map on homology in all dimensions. It follows, for example, that spheres and lens spaces do not admit Anosov diffeomorphisms.
\item 
Ruelle and Sullivan ~\cite{RS} showed that if $M$ admits a codimension $k$ transitive Anosov diffeomorphism with orientable invariant distributions then $H^k(M; \mathbb R)\neq0$. Their result is described in more detail in Section~\ref{sec_RS}.
\item
Yano ~\cite{Y} showed that negatively curved manifolds do not carry transitive\footnote{In fact, using the fact that the outer automorphism group of the fundamental group of a negatively curved manifold is finite, one can use Lefschetz formula to conclude that negatively curved manifolds do not carry Anosov diffeomorphisms} Anosov diffeomorphisms.
\end{enumerate}

{\bf Acknowledgements.} We would like to thank Tom Farrell for useful conversations. We also would like to thank the referee for careful reading and suggestions that resulted in a better exposition.

\section{Periodic points of Anosov diffeomorphisms}
Here we collect some well known facts. We refer the reader to~\cite[Chapter 6]{vick} and~\cite{Sm} for further details on this material.

Recall that if $X$ is a compact simplicial complex and  $f\colon X\to X$ is a self-map with finitely many fixed points then Lefschetz formula calculates the sum of indices of the fixed points --- the Lefschetz number --- as follows
$$
\Lambda(f)\stackrel{\mathrm{def}}{=}\sum_{p\in Fix(f)}ind_f(p)=\sum_{k\ge 0}(-1)^k Tr(f_*|_{H_k(X; \Q)}).
$$
Now assume that $X$ is a closed oriented manifold and $f$ is an Anosov diffeomorphism with oriented unstable subbundle that preserves the orientation of the unstable subbundle. Then $ind_{f^l} (x)=(-1)^{\dim E^u}$ for each $x\in Fix(f^l)$, $l\ge 1$. Hence the number of points fixed by $f^l$ can be calculated. Also, naturality of Poincar\'e duality allows us to use cohomology instead of homology.
\begin{equation}\label{lef_formula}
 |Fix(f^l)|=\left|\sum_{k\ge 0} (-1)^kTr\big((f^*)^{-l}|_{H^k(X; \Q)}\big)\right|
\end{equation}

On the other hand,  $|Fix(f^l)|$ can be calculated from the Markov coding. In particular, for a transitive Anosov diffeomorphism $f$ the following asymptotic formula holds
\begin{equation}\label{form_asymp_transitive}
|Fix(f^l)|=e^{lh_{top}(f)}+o(e^{lh_{top}(f)})
\end{equation}
where $h_{top}(f)$ is the topological entropy of $f$. 
 
 For general, not necessarily transitive Anosov diffeomorphism, $f$ formula~(\ref{form_asymp_transitive}) takes the form 
\begin{equation}\label{form_asymp_general}
|Fix(f^l)|=qe^{lh_{top}(f)}+o(e^{lh_{top}(f)})
\end{equation}
where $q$ is the number of transitive basic sets with entropy equal to $h_{top}(f)$.
Thus some of the traces in~(\ref{lef_formula}) grow exponentially fast and we obtain the following proposition.
\begin{prop}[cf. Proposition~\ref{prop1}]\label{prop0}
If manifold $M$ supports an Anosov diffeomorphism then one of the Betti numbers of $M$ is greater than one. 
\end{prop}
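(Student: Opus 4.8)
The plan is to argue by contradiction: assuming that every Betti number of $M$ is at most one, I would show that $|Fix(f^l)|$ stays bounded as $l\to\infty$, contradicting the exponential growth forced by positive entropy. First I would record that any Anosov diffeomorphism has $\dim E^u\ge 1$, since a uniform contraction is impossible on a compact manifold; hence the unstable foliation produces exponential volume growth and $h_{top}(f)>0$. By the asymptotic formula~(\ref{form_asymp_general}) this gives $|Fix(f^l)|=qe^{lh_{top}(f)}+o(e^{lh_{top}(f)})$ with $q\ge 1$, so $|Fix(f^l)|\to\infty$.

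Next I would invoke the fixed-point count underlying~(\ref{lef_formula}) in its homological form, $|Fix(f^l)|=\bigl|\sum_{k\ge 0}(-1)^k Tr(f_*^{\,l}|_{H_k(M;\Q)})\bigr|$, which only requires that $E^u$ be oriented and its orientation preserved (so that each hyperbolic index equals $(-1)^{\dim E^u}$); no orientability of $M$ itself is needed once one works in homology. The role of the Betti-number hypothesis is then an integrality constraint: if $b_k(M)\le 1$ then $H_k(M;\Q)$ is either $0$ or one-dimensional, and in the one-dimensional case $f_*$ preserves the rank-one lattice $H_k(M;\Z)/\mathrm{torsion}\cong\Z$ and is invertible over $\Z$, hence acts as $\pm 1$. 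Consequently every eigenvalue of $f_*$ on $H_*(M;\Q)$ lies on the unit circle, each trace $Tr(f_*^{\,l}|_{H_k})$ is one of $0,\pm1$, and the right-hand side is bounded by $\sum_k b_k(M)\le n+1$ uniformly in $l$. This contradicts $|Fix(f^l)|\to\infty$ and proves the proposition in this case.

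The main obstacle is arranging the hypotheses on the indices, i.e.\ controlling the signs $ind_{f^l}(p)=(-1)^{\dim E^u}\,\mathrm{sign}\det\!\bigl(df^l|_{E^u_p}\bigr)$. When $E^u$ is orientable over $M$ this costs nothing: replacing $f$ by $f^2$ makes it preserve a chosen orientation of $E^u$, which leaves $M$ and all its Betti numbers unchanged and only doubles the entropy. The delicate situation is when $E^u$ is non-orientable, for then the indices can genuinely change sign along periodic orbits, so the \emph{signed} Lefschetz count may cancel while the \emph{unsigned} count $|Fix(f^l)|$ still blows up. Passing to the orientation double cover of $E^u$ orients the bundle, but it may enlarge the Betti numbers through the twisted summand $H_*(M;\mathfrak{o}(E^u))$, so the contradiction has to be extracted with more bookkeeping; this is precisely the refinement I would defer to Proposition~\ref{prop1} in order to cover the fully general case.
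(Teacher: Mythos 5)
Your argument is essentially the paper's: Proposition~\ref{prop0} is obtained there by exactly the same comparison, namely the exponential growth of $|Fix(f^l)|$ from the Markov coding (formulas~(\ref{form_asymp_transitive})--(\ref{form_asymp_general})) against the Lefschetz count~(\ref{lef_formula}), together with the observation that if every Betti number is at most one then each $f_*$ acts as $\pm1$ on the rank-one lattice $H_k(M;\Z)/\mathrm{torsion}$ and all traces stay uniformly bounded. Your closing concern about non-orientable $E^u$ is a fair one, but it puts you on the same footing as the paper, which derives~(\ref{lef_formula}) only for oriented $M$ with oriented, orientation-preserved unstable bundle and leaves that reduction implicit rather than resolving it in Proposition~\ref{prop1}.
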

The following is an immediate corollary of the above proposition, Theorem~\ref{thm_product} and the Gysin exact sequence for cohomology of a sphere bundle. (For the latter see, \eg~\cite[p. 260]{Sp}.)
\begin{cor}
\label{cor1}
Let $M$ be an $\mathbb S^n$-bundle over a sphere $\mathbb S^m$. If $n\neq m$ or if $n=m$ is even then $M$ does not support Anosov diffeomorphisms.
\end{cor}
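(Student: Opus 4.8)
The plan is to route the two hypotheses of the corollary through the two different tools quoted above: when $n=m$ is even I invoke Theorem~\ref{thm_product} directly, and when $n\neq m$ I compute the rational Betti numbers of the total space $M$ and appeal to Proposition~\ref{prop0}. Throughout I write the bundle as $\S^n\to M\to\S^m$, with fiber $\S^n$ and base $\S^m$, and I assume $n,m\geq 1$.

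First suppose $n=m$ is even, say $n=m=2k$. Then $M$ is an $\S^{2k}$-bundle over the $2k$-dimensional base $\S^{2k}$, and the base $\S^{2k}$ has all Betti numbers equal to $0$ or $1$. Hence the ``moreover'' part of Theorem~\ref{thm_product} applies verbatim and rules out all Anosov diffeomorphisms. Note that the base is even-dimensional precisely because $n$ is even; this is exactly why this case needs the evenness hypothesis and cannot be handled by the Betti-number count below.

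Second suppose $n\neq m$. I would run the Gysin sequence of the bundle with $\Q$-coefficients,
$$\cdots \to H^{k-n-1}(\S^m;\Q) \xrightarrow{\ \smile e\ } H^{k}(\S^m;\Q) \xrightarrow{\pi^*} H^{k}(M;\Q) \to H^{k-n}(\S^m;\Q) \xrightarrow{\ \smile e\ } H^{k+1}(\S^m;\Q) \to \cdots,$$
where $e\in H^{n+1}(\S^m;\Q)$ is the Euler class. Since $H^*(\S^m;\Q)$ is concentrated in degrees $0$ and $m$, the class $e$ can be nonzero only when $m=n+1$. If $m\neq n+1$ the sequence splits, giving $H^k(M;\Q)\cong H^k(\S^m;\Q)\oplus H^{k-n}(\S^m;\Q)$, so the nonzero Betti numbers sit in degrees $0,n,m,n+m$, which are pairwise distinct because $n\neq m$ and $n,m\geq 1$; thus every Betti number is at most $1$. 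If $m=n+1$ the same sequence shows that for $e=0$ one again gets these four distinct degrees, while for $e\neq 0$ the cup-product map $H^0\xrightarrow{\smile e}H^m$ is an isomorphism, killing the would-be classes in degrees $n$ and $m$ and leaving $M$ with the rational cohomology of a sphere; in either case all Betti numbers are at most $1$. Proposition~\ref{prop0} then forbids Anosov diffeomorphisms.

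The anticipated difficulty is organizational rather than conceptual. The decisive point is that the count degenerates exactly when $n=m$: there the summands $H^0(\S^m)$ and $H^n(\S^m)$ land in the same degree $n$, forcing $b_n=2$, which is why $n=m$ even must be pushed through Theorem~\ref{thm_product} and why $n=m$ odd cannot be claimed at all (witness $\T^2=\S^1\times\S^1$, which does carry Anosov diffeomorphisms). A secondary technical point is orientability of the bundle, needed for the Gysin sequence as stated: for $m\geq 2$ the base is simply connected, hence the bundle is $\Q$-orientable, and the leftover case $m=1$ (so $n\geq 2$) is cleanly dispatched by the Wang sequence, which once more yields all Betti numbers at most $1$.
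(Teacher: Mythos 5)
Your proof is correct and follows exactly the route the paper intends: the $n=m$ even case via the ``moreover'' clause of Theorem~\ref{thm_product}, and the $n\neq m$ case by computing Betti numbers from the Gysin sequence and invoking Proposition~\ref{prop0}. The paper states the corollary as immediate from these three ingredients without writing out the details; your case analysis of the Euler class and the $m=1$ edge case simply fills in what the paper leaves implicit.
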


Finally, we will need the following well-known lemma which is helpful for calculating the Lefschetz number
\begin{lemma}\label{lemma_duality}
Let $M$ be a closed oriented $n$-dimensional manifold and let $f\colon M\to M$ be an orientation-preserving diffeomorphism. Choose $k\in[0,n]$ and let the induced automorphism $f_{*k}\colon H_k(M; \Q)\to H_k(M; \Q)$ be represented by a matrix $A_k$ with respect to some basis. Then the induced automorphism $f_{*(n-k)}\colon H_{n-k}(M; \Q)\to H_{n-k}(M; \Q)$ is represented by the matrix $(A_k^T)^{-1}$ with respect to the dual basis.
\end{lemma}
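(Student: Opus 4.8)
The plan is to realize the relationship between $f_{*k}$ and $f_{*(n-k)}$ as a direct consequence of Poincaré duality together with the fact that an orientation-preserving diffeomorphism preserves the intersection pairing.

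First I would invoke Poincaré duality in the form of the nondegenerate intersection pairing
$$\langle\,\cdot\,,\,\cdot\,\rangle\colon H_k(M;\Q)\times H_{n-k}(M;\Q)\to\Q,$$
defined by signed intersection of transverse cycles (equivalently, by transporting the cup-product pairing through the duality isomorphism and evaluating against the fundamental class $[M]$). Nondegeneracy is exactly what lets one choose, for the given basis $\{\alpha_1,\dots,\alpha_r\}$ of $H_k(M;\Q)$, a basis $\{\beta_1,\dots,\beta_r\}$ of $H_{n-k}(M;\Q)$ with $\langle\alpha_i,\beta_j\rangle=\delta_{ij}$; this orthonormal partner is precisely the \emph{dual basis} referred to in the statement.

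The crux is invariance of the pairing under $f_*$. Since $f$ is orientation-preserving it fixes the fundamental class, $f_*[M]=[M]$ (equivalently $\deg f=1$), and signed intersection numbers of cycles are carried to signed intersection numbers of their images; hence $\langle f_*\alpha,f_*\beta\rangle=\langle\alpha,\beta\rangle$ for all $\alpha\in H_k(M;\Q)$ and $\beta\in H_{n-k}(M;\Q)$. This is the one step where the orientation hypothesis is genuinely used: an orientation-reversing map would insert a factor $\deg f=-1$ and alter the conclusion, so identifying and justifying this invariance is the only real content of the argument.

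Finally I would run the short linear-algebra computation. Writing $f_{*k}\alpha_i=\sum_l (A_k)_{li}\alpha_l$ and letting $B$ denote the matrix of $f_{*(n-k)}$ in the dual basis, $f_{*(n-k)}\beta_j=\sum_m B_{mj}\beta_m$, I substitute into $\delta_{ij}=\langle\alpha_i,\beta_j\rangle=\langle f_*\alpha_i,f_*\beta_j\rangle$. Bilinearity together with $\langle\alpha_l,\beta_m\rangle=\delta_{lm}$ collapses the double sum to $(A_k^{T}B)_{ij}$, whence $A_k^{T}B=I$ and therefore $B=(A_k^{T})^{-1}$, as claimed. Beyond keeping the conventions for the pairing consistent, there is no obstacle here: once nondegeneracy and $f$-invariance are established, the remainder is the routine computation just described.
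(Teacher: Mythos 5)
Your proof is correct and is essentially the paper's argument in homological dress: the paper routes through the Universal Coefficients Theorem (so that $f^{*k}$ is represented by $A_k^T$) and the naturality of the cap product with $[M]$, while you package the same two facts as nondegeneracy and $f_*$-invariance of the intersection pairing for a degree-one map, the concluding linear algebra being identical. No gap.
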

\begin{proof}
By the Universal Coefficients Theorem  $f^{*k}\colon H^k(M; \Q)\to H^k(M; \Q)$ is the dual of  $f_{*k}$. Hence $f^{*k}$ is represented by the transpose matrix $A_k^T$. Let $[M]$ be the fundamental class of $M$. The Poincar\'e duality isomorphism $D_k\colon H^k(M; \Q)\to H_{n-k}(M; \Q)$ given by $\varphi\mapsto[M]\smallfrown\varphi$ is natural. Therefore $f_{*(n-k)}\circ D_k\circ f^{*k}=D_k$, which implies that the matrix that represents  $f_{*(n-k)}$ is $(A_k^T)^{-1}$. 
\end{proof}

\section{Proofs of Theorems~\ref{thm_product},~\ref{thm_product2} and~\ref{thm_bundle}}

Our proofs are based on the analysis of the induced automorphism of the cohomology ring and cancellations that occur in the Lefschetz fixed point formula.

Recall that a covering $p\colon \widetilde M \rightarrow  M$ is called {\it normal} if for each $x\in M$ and each pair of lifts
$y$ and $y'$ there is a Deck transformation taking $y$ to $y'$.
The following lemma will allow us to pass to orienting covers. 

\begin{lemma} \label{lemma1}
Let $f\colon M \rightarrow M$ be a transitive Anosov diffeomorphism. Let $p\colon \widetilde M \rightarrow  M$ be a finite-sheeted normal covering. Assume that there exists a lift $\tilde f\colon\widetilde M\to\widetilde M$. Then $\tilde f$ is transitive.
\end{lemma}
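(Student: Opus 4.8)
The plan is to show that $\tilde f$ is an Anosov diffeomorphism whose nonwandering set is all of $\widetilde M$, and then to invoke Smale's spectral decomposition together with connectedness of $\widetilde M$ to conclude that there is a single transitive basic set.

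First I would observe that $\tilde f$ is itself an Anosov diffeomorphism of the compact manifold $\widetilde M$: since $p$ is a covering map and hence a local diffeomorphism, the hyperbolic splitting $TM = E^s\oplus E^u$ pulls back to a $d\tilde f$-invariant splitting of $T\widetilde M$ satisfying the same contraction and expansion estimates (with respect to the lifted metric; the constants are harmless since $\widetilde M$ is compact). In particular $\tilde f$ is Axiom A, so the spectral decomposition applies: $\Omega(\tilde f)=\Omega_1\sqcup\cdots\sqcup\Omega_k$ is a finite disjoint union of closed $\tilde f$-invariant basic sets, and $\tilde f|_{\Omega_i}$ is topologically transitive for each $i$.

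The crux is to prove that $\Omega(\tilde f)=\widetilde M$, and for this I would use that periodic points lift to periodic points. If $x\in M$ satisfies $f^m(x)=x$, then $\tilde f^m$ maps the finite fiber $p^{-1}(x)$ bijectively onto itself, so a suitable further iterate fixes every point of $p^{-1}(x)$; hence each such lift is periodic for $\tilde f$. Thus $p^{-1}(\mathrm{Per}(f))\subseteq \mathrm{Per}(\tilde f)\subseteq\Omega(\tilde f)$. Since $f$ is transitive Anosov, its periodic points are dense in $M$, and the preimage under a covering map of a dense set is dense; therefore $\mathrm{Per}(\tilde f)$ is dense in $\widetilde M$. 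As $\Omega(\tilde f)$ is closed, this forces $\Omega(\tilde f)=\widetilde M$.

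Finally, with $\widetilde M=\Omega_1\sqcup\cdots\sqcup\Omega_k$ a partition into finitely many closed sets, each $\Omega_i$ is also open, its complement being the union of the remaining finitely many closed basic sets. Since $\widetilde M$ is connected this gives $k=1$, so $\Omega(\tilde f)=\widetilde M$ is a single basic set and $\tilde f$ is transitive. The step I expect to require the most care is the identity $\Omega(\tilde f)=\widetilde M$: its honest content is the lifting of periodic points together with density of $\mathrm{Per}(f)$, after which the covering-space and connectedness arguments are routine. I note that connectedness of $\widetilde M$, which holds for the orienting covers to which the lemma is applied, is genuinely needed here, since for instance the trivial disconnected cover $M\sqcup M$ is normal in the sense above yet admits the non-transitive lift $f\sqcup f$.
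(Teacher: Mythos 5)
Your proof is correct, but it follows a genuinely different route from the paper's. The paper argues through foliations: it invokes the fact that a transitive Anosov diffeomorphism has a minimal stable foliation, writes $\widetilde M$ as the finite union of the closures $\overline{\mathcal F^s(y)}$, $y\in p^{-1}(x)$, of lifted stable leaves, uses Baire category plus the transitive action of the Deck group (this is where normality enters) to show each such closure has non-empty interior, and then derives a contradiction from connectedness if any closure is proper. You instead go through the spectral decomposition: $\tilde f$ is Anosov, periodic points of $f$ lift to periodic points of $\tilde f$ because $\tilde f^m$ permutes the finite fiber over a periodic point, density of $\mathrm{Per}(f)$ (itself a standard consequence of the Anosov closing lemma for transitive Anosov systems) then forces $\Omega(\tilde f)=\widetilde M$, and connectedness collapses the finitely many clopen basic sets to one. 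Both arguments rest on a standard nontrivial input --- minimality of the stable foliation for the paper, spectral decomposition plus density of periodic points for you --- and both need $\widetilde M$ connected. A notable feature of your argument is that it never uses normality of the covering, so you in fact prove a slightly stronger statement (any lift to a finite connected cover is transitive); the paper's proof uses normality in an essential-looking way, via the Deck transformations, even though the hypothesis turns out to be removable. Your closing remark about the disconnected cover $M\sqcup M$ correctly isolates connectedness, rather than normality, as the operative hypothesis.
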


\begin{proof}
Denote by $W^s$ the stable foliation of $f$. Also denote by $\mathcal F ^s$ the lift of $W^s$, {\it i.e.}, 
the stable foliation of $\tilde f$.

Recall that an Anosov diffeomorphism is transitive if and only if its stable foliation is minimal. Since $f$ is transitive, for any point $x\in M$
$$
\widetilde M = \bigcup_{y\in p^{-1}(x)} \overline{\mathcal F ^s(y)}.
$$
Since $p\colon \widetilde M \rightarrow \widetilde M$ is normal, Deck transformations act transitively on the sets $\overline {\mathcal F^s(y)}$, $y\in p^{-1}(x)$. Therefore each set $\overline {\mathcal F^s(y)}$, $y\in \widetilde M$, has a non-empty interior.

Pick a point $a\in M$. Assume that for some $b_1\in p^{-1}(a)$, $\overline {\mathcal F^s(b_1)} \neq \widetilde M$. Then, since $\widetilde M$ is connected, we can find $b_2\in p^{-1}(a)$ such that $\overline {\mathcal F^s(b_1)} \neq \overline {\mathcal F^s(b_2)}$ and $\overline {\mathcal F^s(b_1)}\cap\overline {\mathcal F^s(b_2)}\neq \varnothing$. Take a point $y\in \overline {\mathcal F^s(b_1)}\cap \overline {\mathcal F^s(b_2)}$. Then, clearly $\overline {\mathcal F^s(b_1)}\cap\overline {\mathcal F^s(b_2)}\supset \overline {\mathcal F^s(y)}$. Therefore $\overline {\mathcal F^s(b_1)}\cap \overline {\mathcal F^s(b_2)}$ has non-empty interior. Hence $\overline {\mathcal F^s(b_1)}\cap {\mathcal F^s(b_2)}\neq \varnothing$, which implies $\overline {\mathcal F^s(b_1)}\supset\overline {\mathcal F^s(b_2)}$. Similarly $\overline {\mathcal F^s(b_2)}\supset\overline {\mathcal F^s(b_1)}$. Hence  $\overline {\mathcal F^s(b_1)} = \overline {\mathcal F^s(b_2)}$ which gives a contradiction.
\end{proof}

\begin{proof}[Proof of Theorem~\ref{thm_product}] Assume that $f\colon E \to E$ is a transitive Anosov diffeomorphism. We show that without loss of generality we can assume that $E$ is fiber-oriented, oriented (hence the base $M$ is oriented) and that the unstable distribution $E^u$ is oriented.

After passing to a finite power of $f$ if necessary, pick a point $p\in E$ fixed by $f$.
Consider a finite cover $q\colon (\widetilde E, \tilde p) \rightarrow (E, p)$ that orients $E$, fiber-orients $E$ and orients the unstable distribution $E^u$. Clearly $q\colon \widetilde E \rightarrow E$ is finite-sheeted and it is easy to see that $q\colon \widetilde E \rightarrow E$ is normal. Notice that the group $q_* \pi_1 (\widetilde E,\tilde p)\subset \pi_1( E, p)$ consists of homotopy classes of loops along which $TE$, the distribution tangent to the fibers, and $E^u$ are all orientable. Therefore, it is clear that $f_*\colon \pi_1(E, p)\rightarrow \pi_1(E, p)$ preserves the conjugacy class of $q_*\pi_1(\widetilde E, \tilde p)$. This implies that $f\colon E\rightarrow E$ admits a lift $\tilde f\colon \widetilde E \rightarrow \widetilde E$ which is an Anosov diffeomorphism with orientable unstable distribution. By Lemma~\ref{lemma1}, $\tilde f$ is transitive. Thus we can assume that $E$ is oriented and fiber-oriented, and  that $E^u$ is oriented to start with.

 Also we can assume that $f$ preserves the orientation of $E^u$. Otherwise pass to $f^2$. 
 \begin{remark}
 Similar reduction will be used a few more times in the paper.
 \end{remark}
 Consider the Gysin exact sequence (see, \eg~\cite[p. 260]{Sp}) for the sphere bundle $\S^{2n}\to E\stackrel{p}{\to} M$
 $$
 \ldots \longrightarrow H^{k-(2n+1)}(M; \Z)\stackrel{0}{\longrightarrow} H^k(M; \Z) \stackrel{p^*}{\longrightarrow}H^k(E; \Z)\to H^{k-2n}(M; \Z)\longrightarrow\ldots
 $$
 The first homomorphism is zero since it is given by cupping with the Euler class of the sphere bundle $e\in H^{2n+1}(M; \Z)=0$. The second homomorphism is the pullback by the projection $p\colon E\to M$. To describe the third homomorphism we let $D$  be the mapping cylinder of $p\colon E\to M$. Then $D$ is a $(2n+1)$-dimensional disk bundle over $M$ whose boundary is $E$. The third homomorphism in the Gysin sequence is defined by the following commutative diagram
 \begin{equation}
 \label{eq_third_gysin}
 \begin{xymatrix}{
 H^k(E; \Z) \ar[d]_\Delta \ar[r] & H^{k-2n}(M; \Z) \ar[ld]^{p^*(\cdot)\smallsmile u}\\
 H^{k+1}(D,E; \Z)
 }
 \end{xymatrix}
 \end{equation}
Here $\Delta$ is the connecting homomorphism of the long exact sequence of the pair $(D, E)$, $u\in H^{2n+1}(D,E; \Z)$ is the Thom class of the oriented bundle $D\stackrel{p}{\longrightarrow} M$, and the diagonal map $z\mapsto p^*(z)\smallsmile u$ is the Thom isomorphism (see, \eg~\cite{MSt} or~\cite{Sp}).
\begin{remark}
In many sources the Gysin sequence is presented only for sphere bundles associated to vector bundles, \cf remarks in~\cite{Sp} on p. 91. However, Gysin exact sequence holds for general sphere bundles~\cite[p. 260]{Sp}. In the same way Thom's Isomorphism Theorem is usually discussed in the context of vector bundles, however a more general version for arbitrary topological disk bundles also holds true~\cite[p. 259]{Sp}.
\end{remark}

From the Gysin sequence we see that $p^*\colon H^k(M;\Z)\to H^k(E;\Z)$ is an isomorphism for $k<2n$. Also $H^k(E;\Z)\simeq H^{k-2n}(M,\Z)$ when $k>2n$. When $k=2n$ Gysin sequence yields the following short exact sequence
\begin{equation}
\label{eq_short_exact}
0\longrightarrow H^{2n}(M;\Z)\longrightarrow H^{2n}(E;\Z)\longrightarrow H^0(M;\Z)\longrightarrow 0
\end{equation}
Since $M$ is oriented $H^{2n}(M;\Z)\simeq H^0(M;\Z)\simeq \Z$ and, hence, $H^{2n}(E,\Z)\simeq\Z^2$. Denote by $\varepsilon$ the generator of $H^0(M;\Z)$ dual to the fundamental class of $M$. Consider the diagram~(\ref{eq_third_gysin}) when $k=2n$. We augment this diagram by the maps induced by the inclusion of the fiber $i\colon \S^{2n}\to E$
\begin{equation}
\label{eq_third_gysin_again}
\begin{xymatrix}{
\langle\bar y\rangle=H^{2n}(\S^{2n};\Z)\ar[d]_{\Delta'}& H^{2n}(E;\Z)\ar[l]_{i^*} \ar[r] \ar[d]_\Delta&H^{0}(M;\Z)=\langle\varepsilon\rangle \ar[ld]^{p^*(\cdot)\smallsmile u}\\
\langle i^*(u)\rangle=H^{2n+1}(\DD^{2n+1},\S^{2n};\Z) & H^{2n+1}(D, E;\Z)\ar[l]_-{i^*}
}
\end{xymatrix}
\end{equation}
It is easy to see that the connecting homomorphism $\Delta'$ is an isomorphism. The left square commutes by the naturality of long exact sequence of a pair.
By the definition of the Thom class, $i^*(u)$ is the generator of $H^{2n+1}(\DD^{2n+1},\S^{2n};\Z)$. Let $\bar y=(\Delta')^{-1}(i^*(u))$. Clearly, $p^*(\varepsilon)\smallsmile u=u$ and we see that the above diagram gives the isomorphism $H^{0}(M;\Z)\simeq H^{2n}(\S^{2n};\Z)$, $\varepsilon\mapsto \bar y$. Hence the map $H^{2n}(E;\Z)\to H^0(M;\Z)$ from the short exact sequence~(\ref{eq_short_exact}) can be viewed as $i^*\colon H^{2n}(E;\Z)\to H^{2n}(\S^{2n};\Z)$ and the short exact sequence~(\ref{eq_short_exact}) becomes
$$
0\longrightarrow H^{2n}(M;\Z)\stackrel{p^*}{\longrightarrow} H^{2n}(E;\Z)\stackrel{i^*}{\longrightarrow} H^{2n}(\S^{2n};\Z)\longrightarrow 0.
$$

We let $y=(i^*)^{-1}(\bar y)$ and then complete it to a basis $\{x,y\}$ of $H^{2n}(E;\Z)$. We identify $H^{2n}(E;\Z)$ with $\Z^2=\langle x,y\rangle$. Recall that, modulo Poincar\'e duality, cup product pairing coincides with the intersection pairing. Therefore we have $y\smallsmile y=0$ and $x\smallsmile y=\omega$, where $\omega$ is a generator of $H^{4n}(E,\Z)$. Let $x\smallsmile x=q\omega$.

Diffeomorphism $f$ induces an automorphism of $H^{2n}(E;\Z)$ given by a matrix 
$A=\bigl(\begin{smallmatrix}
  a & b\\
 c&  d                                                                                                                               
 \end{smallmatrix}
\bigr).$
After passing to $f^2$ if necessary, we can assume that $\det A=1$ and that $f^*\omega=\omega$. Since $f^*$ respects the cup product we have $(f^*y)^2=0$, $f^*x\smallsmile f^*y=\omega$ and $(f^*x)^2=q\omega$. We obtain the following equations on $a$, $b$, $c$ and $d$
$$
\begin{cases}
ad-bc=1\\
c^2q+2cd=0\\
acq+ad+bc=1\\
a^2q+2ab=q
\end{cases}
$$
It follows easily that $b=c=0$ and $a=d=\pm 1$. After passing to $f^2$ if necessary, we can assume that $a=d=1$. In particular, $f^*y=y$.
\begin{lemma}\label{lemma_cd}
Assume that $2n<k<4n$. Then the diagram
$$
\begin{xymatrix}{
H^{k-2n}(D;\Z)\ar[d]^{\smallsmile u} \ar[r]^{i^*} & H^{k-2n}(E;\Z)\ar[d]^{ \smallsmile y} \\
H^{k+1}(D,E;\Z) & H^{k}(E;\Z)\ar[l]_-\Delta
}
\end{xymatrix}
$$
commutes up to sign.
\end{lemma}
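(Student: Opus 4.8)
The plan is to obtain the sign-commutativity from two facts: the identity $\Delta y=\pm u$, and the observation that the connecting homomorphism $\Delta$ of the pair $(D,E)$ is a derivation with respect to the $H^*(D;\Z)$-module structures on $H^*(E;\Z)$ and $H^*(D,E;\Z)$. Granting both, for $\alpha\in H^{k-2n}(D;\Z)$ one finds
$$
\Delta\big((i^*\alpha)\smallsmile y\big)=\pm\,\alpha\smallsmile\Delta y=\pm\,\alpha\smallsmile u,
$$
and since $\alpha\mapsto\alpha\smallsmile u$ is exactly the left-hand edge of the square, this is the desired statement. Here the top arrow $i^*\colon H^{k-2n}(D;\Z)\to H^{k-2n}(E;\Z)$ is restriction to the boundary $E\subset D$, to be distinguished from the fiber restrictions appearing below.

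First I would pin down $\Delta y=\pm u$. Recall that $y$ was built in~(\ref{eq_third_gysin_again}) so that its restriction to a fiber generates $H^{2n}(\S^{2n};\Z)$, namely $i^*y=\bar y$ with $\Delta'\bar y=i^*u$, the restriction of the Thom class $u$ to the fiber pair $(\DD^{2n+1},\S^{2n})$. Naturality of the connecting homomorphism under the inclusion of pairs $(\DD^{2n+1},\S^{2n})\hookrightarrow(D,E)$ --- the commuting left square of~(\ref{eq_third_gysin_again}) --- gives $i^*(\Delta y)=\Delta'(i^*y)=\Delta'\bar y=i^*u$. Because the Thom class restricts to a generator on every fiber, the restriction $H^{2n+1}(D,E;\Z)\to H^{2n+1}(\DD^{2n+1},\S^{2n};\Z)$ is an isomorphism (both groups being $\Z$ by the Thom isomorphism), so $\Delta y=u$.

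Second, the derivation property is a standard cochain-level computation (see, \eg~\cite{Sp}). Represent $\alpha$ by a cocycle $a\in C^{k-2n}(D)$ and lift a cocycle representative $b$ of $y$ to $\tilde b\in C^{2n}(D)$ with $\tilde b|_E=b$, so that $\delta\tilde b\in C^{2n+1}(D,E)$ represents $\Delta y$. As restriction to $E$ is a ring map, $a\smallsmile\tilde b$ lifts $(i^*\alpha)\smallsmile y$; and since $\delta a=0$, the Leibniz rule yields $\delta(a\smallsmile\tilde b)=(-1)^{k-2n}\,a\smallsmile\delta\tilde b$. The class of the right-hand side is $(-1)^{k-2n}\alpha\smallsmile\Delta y$ computed by the relative cup product $H^{k-2n}(D;\Z)\otimes H^{2n+1}(D,E;\Z)\to H^{k+1}(D,E;\Z)$, whence $\Delta((i^*\alpha)\smallsmile y)=(-1)^{k-2n}\alpha\smallsmile u$.

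The content here is conceptual rather than computational, so I do not expect a serious obstacle; the only points demanding care are that the relative cup product $\smallsmile u$ on the left edge of the square be recognized as the same $H^*(D;\Z)$-module action entering the derivation formula, and that the boundary restriction $E\hookrightarrow D$ not be confused with the fiber restriction. Because the claim is only up to sign, the factor $(-1)^{k-2n}$ may be discarded and no orientation bookkeeping is needed; the hypothesis $2n<k<4n$ is not used in the commutativity itself but merely delimits the range in which $i^*$ agrees with $p^*\colon H^{k-2n}(M;\Z)\to H^{k-2n}(E;\Z)$, which is where the lemma is applied.
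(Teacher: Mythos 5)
Your proposal is correct and follows essentially the same route as the paper: the paper likewise establishes $\Delta y=u$ from diagram~(\ref{eq_third_gysin_again}) and then performs the identical cochain-level Leibniz computation, lifting a representative of $y$ to a cochain $\hat y$ on $D$ so that $\delta\hat y$ represents $u$ and concluding $\Delta(i^*z\smallsmile y)=(-1)^{k}z\smallsmile u$. The only cosmetic difference is that you deduce $\Delta y=u$ by restricting to a fiber and invoking naturality of the connecting homomorphism, whereas the paper reads it off directly from the Thom-isomorphism description of the Gysin map; both are fine.
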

\begin{proof}
It is clear from~(\ref{eq_third_gysin_again}) that $\Delta(y)=u$. Take a cocycle $\tilde y\in C^{2n}(E;\Z)$ that represents $y$ and pick a cochain $\hat y\in C^{2n}(D,\Z)$  such that $i^\#\hat y=\tilde y$. Then $\delta\hat y$ is a cocycle in $C^{2n+1}(D, E;\Z)$ that represents $u$.

Now take any class $z\in H^{k-2n}(D;\Z)$ and a cocycle $\tilde z$ that represents $z$. Consider the short exact sequence of chain complexes
$$
\begin{xymatrix}
{
0\ar[r] & C^k(D,E;\Z)\ar[r]\ar[d]&C^k(D,\Z)\ar[r]^{i^\#}\ar[d]_\delta & C^k(E;\Z)\ar[r]\ar[d]&0\\
0\ar[r] & C^{k+1}(D,E;\Z)\ar[r]& C^{k+1}(D;\Z)\ar[r]& C^{k+1}(E,\Z)\ar[r] &0
}
\end{xymatrix}
$$
In order to calculate $\Delta(i^*(z)\smallsmile y)$ we need to go from the right-upper corner to the left-lower corner of this diagram.
Clearly $i^\#(\tilde z\smallsmile \hat y)=i^\#\tilde z\smallsmile  \tilde y$. The coboundary can be calculated as follows $\delta(\tilde z\smallsmile\hat y)=\delta\tilde z\smallsmile \hat y+(-1)^k\tilde z\smallsmile\delta\hat y=(-1)^k\tilde z\smallsmile\delta\hat y$. Since $\delta\hat y$ vanishes on chains in $E$, $\tilde z\smallsmile\delta\hat y$ also must vanish on $E$. Hence $\tilde z\smallsmile\delta\hat y$ can be viewed as a cocycle in $C^{k+1}(D,E;\Z)$. We have
$$
\Delta([i^\#\tilde z\smallsmile y])=(-1)^k[\tilde z\smallsmile\delta\hat y]=(-1)^k[\tilde z]\smallsmile[\delta\hat y]=z\smallsmile u.
$$
\end{proof}
\begin{lemma} Assume that $2n<k<4n$. Then $H^{k-2n}(E;\Z)\ni z\mapsto z\smallsmile y\in H^k(E;\Z)$ is an isomorphism.
\end{lemma}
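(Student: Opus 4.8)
The plan is to deduce the statement directly from the square of Lemma~\ref{lemma_cd}, in which the map $\smallsmile y$ appears as the right-hand vertical arrow while the remaining three arrows $i^*$, $\smallsmile u$ and $\Delta$ are maps whose invertibility I can check by hand. Since that square commutes up to sign, once the other three arrows are shown to be isomorphisms in the range $2n<k<4n$, the fourth arrow $\smallsmile y$ is forced to be an isomorphism as well, being (up to sign) a composite of isomorphisms: $z\smallsmile y=\pm\,\Delta^{-1}\big((i^*)^{-1}(z)\smallsmile u\big)$.

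First I would identify the three auxiliary arrows. The left arrow $\smallsmile u\colon H^{k-2n}(D;\Z)\to H^{k+1}(D,E;\Z)$ is precisely the Thom isomorphism of the disk bundle $D\to M$ (here $u$ is the Thom class), hence an isomorphism in every degree. For the top arrow I would use that $D$ is the mapping cylinder of $p\colon E\to M$, so that $D$ deformation retracts onto $M$ and the inclusion $i\colon E\hookrightarrow D$ corresponds to $p$ under the identification $H^*(D;\Z)\cong H^*(M;\Z)$; thus $i^*$ is identified with $p^*\colon H^{k-2n}(M;\Z)\to H^{k-2n}(E;\Z)$. Since $2n<k<4n$ yields $0<k-2n<2n$, the Gysin computation recorded above (that $p^*$ is an isomorphism in degrees below $2n$) shows this arrow is an isomorphism.

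For the bottom arrow I would invoke the long exact cohomology sequence of the pair $(D,E)$, namely $H^k(D;\Z)\to H^k(E;\Z)\xrightarrow{\Delta}H^{k+1}(D,E;\Z)\to H^{k+1}(D;\Z)$. Because $D\simeq M$ is $2n$-dimensional and $k>2n$, both flanking groups $H^k(D;\Z)$ and $H^{k+1}(D;\Z)$ vanish, so $\Delta$ is an isomorphism. With all three auxiliary arrows now isomorphisms, the commutativity up to sign supplied by Lemma~\ref{lemma_cd} completes the argument (the sign is harmless, as multiplication by $-1$ is an automorphism).

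I expect no serious obstacle: the genuine content is already in Lemma~\ref{lemma_cd}, and the remaining work is the bookkeeping of recognizing $D\simeq M$, matching $i^*$ with $p^*$, and reading the dimension vanishing off the pair sequence. The only point requiring a little care is the degree range — one must check that $2n<k<4n$ simultaneously places $k-2n$ below $2n$ (so that $p^*=i^*$ is an isomorphism) and places $k$ above $2n$ (so that the pair sequence forces $\Delta$ to be an isomorphism) — and these are exactly the two halves of the hypothesis.
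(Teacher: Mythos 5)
Your proof is correct and follows essentially the same route as the paper: the paper likewise deduces the lemma from the square of Lemma~\ref{lemma_cd}, combined with diagram~(\ref{eq_third_gysin}), identifying the inverse of $\smallsmile y$ (up to sign) with the composite of the Gysin isomorphism $H^k(E;\Z)\to H^{k-2n}(M;\Z)$ and $p^*$. Your "three of the four arrows are isomorphisms" bookkeeping, including the identification $i^*=p^*$ via the mapping cylinder and the vanishing of $H^{*}(D;\Z)\cong H^{*}(M;\Z)$ above degree $2n$, is exactly the content the paper packages into its reference to the Gysin sequence.
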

\begin{proof}
 By combining~(\ref{eq_third_gysin}) and the diagram from Lemma~\ref{lemma_cd} we obtain the following diagram that commutes up to sign
$$
\begin{xymatrix}
{
H^k(E;\Z)\ar[r] \ar@/^2pc/[rrr]^\Delta&H^{k-2n}(M;\Z)\ar[rr]^{\text{Thom isom.}} \ar[ld]_{p^*} \ar[d]_{p^*} & &H^{k+1}(D,E;\Z)\\
H^{k-2n}(E;\Z) \ar[u]_{\smallsmile y}& H^{k-2n}(D;\Z) \ar[l]_{i^*}\ar[rru]_{\smallsmile u}
}
\end{xymatrix}
$$
Hence $z\mapsto z\smallsmile y$ is an isomorphism whose inverse is the composition (up to sign) of the isomorphisms from the Gysin sequence $H^k(E;\Z)\longrightarrow H^{k-2n}(M;\Z)\stackrel{p^*}{\longrightarrow} H^{k-2n}(E,\Z)$.
\end{proof}

Since $f^*y=y$ the following diagram commutes
$$\begin{CD}
 H^{k-2n}(E; \Z)@>f^*>>H^{k-2n}(E; \Z)\\
 @VV{\smallsmile y}V                                      @VV{\smallsmile y}V\\
 H^{k}(E; \Z)@>f^*>>H^{k}(E; \Z)
\end{CD}$$
for $k\in[2n+1,4n-1]$.
In particular, $Tr\big((f^*)^{-1}|_{H^k(E; \Q)}\big)=Tr\big((f^*)^{-1}|_{H^{k-2n}(E; \Q)}\big)$. Hence, the Lefschetz formula~(\ref{lef_formula}) takes the form
\begin{equation}\label{eq_lef_thm1}
|Fix(f^l)|=\left|2\sum_{k=2n+1}^{4n-1} (-1)^kTr\big((f^*)^{-l}|_{H^k(E; \Q)}\big)+2+Tr(A^{-l})\right|.
\end{equation}
Let
$$
\lambda\stackrel{\mathrm{def}}{=}\max_k\max\{|\mu|\,: \mu\;\;\;\mbox{is an eigenvalue of}\;\;\;(f^*)^{-1}|_{H^k(E; \Q) }\}.
$$
Because the number of periodic points grows exponentially, $\lambda$ must be greater than one. Recall that $A=Id_{\Z^2}$. Hence~(\ref{eq_lef_thm1}) implies the following asymptotic formula for the number of periodic points
\begin{equation}\label{lef_asymp}
|Fix(f^l)|=2q\lambda^l+o(\lambda^l)
\end{equation}
This contradicts to~(\ref{form_asymp_transitive}) according to which the coefficient by the leading exponential term must be one.

In the case when all Betti numbers of $M$ are less than or equal to one we have that all Betti numbers of $E$ in dimensions different from $2n$ are also less than or equal to one. Then~(\ref{eq_lef_thm1}) implies that $|Fix(f^l)|$ is uniformly bounded and, hence, diffeomorphism $f$ cannot be Anosov.
\end{proof}

Also by comparing~(\ref{form_asymp_general}) and~(\ref{lef_asymp}) we get the following statement.
\begin{add}[to Theorem~~\ref{thm_product}]
Let $M$ be an orientable closed $2n$-dimensional manifold. Assume that $\S^{2n}\to E\to M$ is an oriented smooth sphere bundle over $M$. Assume that  $f\colon E \to E$ is an Anosov diffeomorphism whose unstable distribution is orientable. Then the number of basic sets that carry maximal topological entropy $h_{top}(f)$ is even.
\end{add}
Analogous addenda to Theorems \ref{thm_product2},~\ref{thm_bundle} and~\ref{thm_spheres} also hold.
\begin{proof}[Proof of Theorem~\ref{thm_bundle}]
 Assume that there exists an Anosov diffeomorphism $f\colon E\to E$. As earlier, we can assume that $E$ is oriented and that $f$ is orientation preserving. It is easy to calculate cohomology of $E$, \eg from the Wang exact sequence (see \eg~\cite[p. 456]{Sp}) we obtain
 $$
 H^k(E; \Q)=
 \begin{cases}
  H^k(M; \Q) & k=0,\ldots n \\
  0 & k=n+1,\ldots m-1 \\
  H^{k-m}(M; \Q) & k=m,\ldots m+n
 \end{cases}
 $$
The isomorphism $i^*\colon H^k(E; \Q)\to H^k(M; \Q)$, $k\le n$, is induced by the inclusion of the fiber $i\colon M\to E$. Also $i$ induces an isomorphism on homology $i_*\colon H_k(M; \Q)\to H_k(E; \Q)$, $k\le n$.

Fix an integer $k\in[0,n]$. Denote by $f^{*k}$ and $f_{*k}$ the automorphisms induced by $f$ on $H^k(E; \Q)$ and $H_k(E; \Q)$ respectively. By Lemma~\ref{lemma_duality}, if  $f_{*k}$ is represented by a matrix $A_k$ then   the matrix that represents  $f_{*(m+n-k)}$ is $(A_k^T)^{-1}$. 

Let $D_k^M\colon H^k(M; \Q)\to H_{n-k}(M; \Q)$ be the Poincar\'e duality isomorphism for $M$. Consider the isomorphism $D_k'\colon H^k(E; \Q)\to H_{n-k}(E; \Q)$ given by the composition
$$
\begin{CD}H^k(E; \Q)@>i^*>>H^k(M; \Q)@>D_k^M>>H_{n-k}(M; \Q)@>i_*>> H_{n-k}(E; \Q).
\end{CD}
$$
It follows from naturality of the cap product that $D_k'$ is given by $\varphi\mapsto i_*[M]\smallfrown\varphi$, where $[M]$ is the fundamental class of $M$. 

Next we check that $D_k'$ is also natural. Let $\varphi\in H^k(E; \Q)$. Then
$$
f_{*(n-k)}\big(D_k'(f^{*k}\varphi)\big)
=f_{*(n-k)}(i_*[M]\smallfrown f^{*k}\varphi)\stackrel{(*)}{=}f_{*n}(i_*[M])\smallfrown\varphi\stackrel{(**)}{=}i_*[M]\smallfrown\varphi.
$$
Here $(*)$ is due to naturality of cap product and $(**)$ is because $H_n(E;\Q)$ is isomorphic to $\Q$ and hence we can assume that $f_{*n}$ is identity.

The above calculation shows that $D_k'=f_{*(n-k)}D_k'f^{*k}$. Therefore $f_{*(n-k)}$ is represented by the matrix $(A_k^T)^{-1}$. Now we let $l\ge 1$ and apply the Lefschetz formula to $f^l$. We perform certain cancellations using the observations that we have made.
\begin{multline*}
|Fix(f^l)|=\Bigl|\sum_{k=0}^{m+n}(-1)^kTr(f_{*k}^l)\Bigr|\\
=\Bigl|\sum_{k=0}^{n}(-1)^kTr(f_{*k}^l)+\sum_{k=0}^{n}(-1)^{m+n-k}Tr(f_{*(m+n-k)}^l)\Bigr|\\
=\Bigl|\sum_{k=0}^{n}(-1)^kTr(A_{k}^l)+\sum_{k=0}^{n}(-1)^{m+n-k}Tr((A_k^T)^{-l})\Bigr|\\
=\Bigl|\sum_{k=0}^{n}(-1)^kTr(A_{k}^l)+\sum_{k=0}^{n}(-1)^{m+n-k}Tr(A_{n-k}^{l})\Bigr|\\
=\Bigl|\sum_{k=0}^{n}(-1)^kTr(A_{k}^l)+\sum_{k=0}^{n}(-1)^{m+k}Tr(A_{k}^{l})\Bigr|\\
=\Bigl|\sum_{k=0}^{n}\big((-1)^k+(-1)^{m+k}\big)Tr(A_{k}^l)\Bigr|.
\end{multline*}
We see that if $m$ is odd then $Fix(f^l)=\varnothing$, which gives a contradiction. If $m$ is even then the above calculation gives the asymptotic formula
$$
|Fix(f^l)|=2q\lambda^l+o(\lambda^l),
$$
for some $q\in\Z$ and $\lambda>1$. Together with~(\ref{form_asymp_transitive}) this implies that $f$ is not a transitive Anosov diffeomorphism.
\end{proof}
\begin{proof}[Proof of Theorem~\ref{thm_product2}]
Assume that there exists an Anosov diffeomorphism $f\colon E\to E$. Again, by passing to a finite cover if necessary, we can assume that $M$ is orientable and $E$ is orientable. And by passing to $f^2$ if necessary, we can assume that $f$ preserves orientation. Cohomology of $E$ can be easily computed (\eg using the Gysin exact sequence):
$$
 H^k(E; \Q)=
 \begin{cases}
  H^k(M; \Q) & k=0,\ldots n \\
  0 & k=n+1,\ldots m-1 \\
  H^{k-m}(M; \Q) & k=m,\ldots m+n
 \end{cases}
$$
Since the dimension of the fiber is greater than the dimension of the base, the bundle $p\colon E\to M$ admits a section $s\colon M\to E$, $p\circ s=id_M$. Again, from the Gysin sequence, $p^*\colon H^k(E; \Q)\to H^k(M; \Q)$ is an isomorphism for $k=0,\ldots n$. It follows that $s^*\colon H^k(E; \Q)\to H^k(M; \Q)$ is an isomorphism for $k=0,\ldots n$. 

Section $s$ plays the role of the inclusion $i$ from the proof of Theorem~\ref{thm_bundle} and the rest of the proof proceeds in exactly the same way as the proof of Theorem~\ref{thm_bundle}. 
\end{proof}

\section{Proofs of Theorems~\ref{thm_spheres} and~\ref{thm_spheres2}}
Let $M=(\S^{d_1})^{n_1}\times(\S^{d_2})^{n_2}\times\ldots \times(\S^{d_m})^{n_m}$, where $d_1<d_2<\ldots <d_m$ are the dimensions of the spheres. If $a$ is the generator of $H^{d_p}(\S^{d_p}; \Z)$ and $\pi\colon M\to\S^{d_p}$ is the projection to one of the factors then $x=\pi^*(a)$ is a cohomology class in $H^{d_p}(M; \Z)$. By the K\"{u}nneth formula (see, \eg~\cite[p. 247]{Sp}) such classes generate $H^*(M; \Z)$. Introduce the following notation for these generators
$$
x_1^1, x_2^1, \ldots\quad x_{n_1}^1; x_1^2, x_2^2, \ldots\quad x_{n_2}^2; \ldots\quad  x_1^m, x_2^m, \ldots\quad x_{n_m}^m.
$$
Then the degree of $x_q^p$ is $d_p$. These classes are subject to relations
$$
x_q^p\smallsmile x_q^p=0, \quad\quad x_q^p\smallsmile x_l^k=(-1)^{d_p d_k}x_l^k\smallsmile x_q^p.
$$
Also note that we have chosen an increasing order on the generators. 

Fix $d\geq 1$ and consider $H^d(M; \Z)$ as a free abelian group. We will choose an ordered basis for $H^d(M; \Z)$ which we will then use throughout the proofs. Let $\vec\alpha=(\alpha_1, \alpha_2, \ldots \alpha_m)$ be an $m$-tuple of integers such that $0\leq \alpha_p\leq n_p$, $p=1, \ldots, m$, and $d=\alpha_1d_1 + \alpha_2d_2+\ldots +\alpha_md_m$. Call such an $m$-tuple a {\it splitting of} $d$. For each $p$ pick an $\alpha_p$-tuple of generators of degree $d_p$
$$
x^p_{i(1)}, x^p_{i(2)}, \ldots \quad x^p_{i(\alpha_p)}, \quad i(1)<i(2)<\ldots<i(\alpha_p).
$$

If $\alpha_p=0$ then let $y(\alpha_p)=1\in H^0(M; \Z)$. Otherwise let $y(\alpha_p)=x^p_{i(1)}\smallsmile x^p_{i(2)}\smallsmile \ldots \quad \smallsmile x^p_{i(\alpha_p)}$. Finally let 
$$
z=y(\alpha_1)\smallsmile y(\alpha_2)\smallsmile \ldots\quad \smallsmile y(\alpha_m).
$$
Clearly $z$ is a product of generators that has degree $d$. By the K\"{u}nneth formula the collection of all classes of this form is a basis of $H^d(M; \Z)$. Given a basis element $z \in H^d(M; \Z)$ we will write $\vec\theta(z)$ for the splitting from which $z$ was obtained.

Given two splittings of $d$ --- $\vec \alpha= (\alpha_1, \alpha_2, \ldots\quad \alpha_m)$ and $\vec\beta = (\beta_1, \beta_2,\ldots\quad \beta_m)$ --- we declare that $\vec\alpha <\vec\beta$ if there exists $p\geq  1$ such that $\alpha_i =\beta_i$ for all $i<p$ and $\alpha_p <\beta_p$. This order on splittings together with the lexicographic order on the set of $\alpha_p$-tuples induces an order on the chosen basis of $H^d(M; \Z)$.

\begin{example}
Let $M=(\S^1)^4\times(\S^2)^2\times(\S^3)^2$. Then the basis of $H^3 (M; \Z)$ is $x_1^3$,  $x_2^3$, $x_1^1\smallsmile x_1^2$,  $x_1^1\smallsmile x_2^2$,  $x_2^1\smallsmile x_1^2$, $x_2^1\smallsmile x_2^2$, $x_3^1\smallsmile x_1^2$, $x_3^1\smallsmile x_2^2$, $x_4^1\smallsmile x_1^2$, $x_4^1\smallsmile x_2^2$, $x_1^1\smallsmile x_2^1\smallsmile x_3^1$, $x_1^1\smallsmile x_2^1\smallsmile x_4^1$, $x_1^1\smallsmile x_3^1 \smallsmile x_4^1$, $x_2^1\smallsmile x_3^1\smallsmile x_4^1$ .
\end{example}

Let $f^*$ be an automorphism of the cohomology ring $H^*(M;\Z)$. Let $\EuScript B^d$ be the ordered basis of $H^d (M; \Z)$ as described above. Given a splitting $\vec\alpha$ let 
$$
\EuScript C^d (\vec\alpha) = \{b\in \EuScript B^d \colon \vec\theta(b)=\vec\alpha\} 
$$
and let
$$
\EuScript B^d (\vec\alpha) = \{b\in \EuScript B^d \colon \vec\theta(b)\ge\vec\alpha\} .
$$
Clearly $\EuScript B^d(\mbox{-})=\{\EuScript B^d(\vec\alpha)\colon \vec\alpha$ is a splitting of $d \}$ is a filtration of $\EuScript B^d$.

\begin{lemma}\label{lemma_upper_triangular}
The filtration $\EuScript B^d(\mbox{-})$ spans an $f^*$-invariant filtration of $H^d (M; \Z)$. That is, for every $d$ and for every splitting $\vec\alpha$
$$
f^*span_\Z \: \EuScript B^d(\vec\alpha) = span_\Z \: \EuScript B^d(\vec\alpha).
$$
\end{lemma}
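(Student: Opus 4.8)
The plan is to use only that $f^*$ is a graded ring automorphism, so that it is determined by its values on the generators $x_q^p$, together with the fact that multiplication interacts well with the lexicographic order on splittings. I will regard the splitting $\vec\theta(z)$ of a basis monomial $z$ as its \emph{multidegree}: it simply records how many generators of each degree $d_p$ occur in $z$. Two elementary facts will drive everything. First, the lexicographic order on splittings is compatible with addition: if $\vec\beta\le\vec\gamma$ then $\vec\beta+\vec\delta\le\vec\gamma+\vec\delta$ for every $\vec\delta$, since adding the same vector to both sides leaves unchanged the first coordinate in which they differ and the comparison there. Second, for a single generator $x_q^p$ the class $f^*(x_q^p)$ lies in $H^{d_p}(M;\Z)$, and $\vec e_p=(0,\dots,0,1,0,\dots,0)$ (with the $1$ in position $p$) is the \emph{minimum} splitting of $d_p$, so $\mathrm{span}\,\EuScript B^{d_p}(\vec e_p)=H^{d_p}(M;\Z)$.

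First I would verify this minimality, which is the elementary heart of the matter and uses nothing about $f^*$ beyond preservation of degree. Every basis monomial occurring in $f^*(x_q^p)$ has total degree $d_p$, i.e. its multidegree $\vec\beta$ satisfies $\sum_i\beta_id_i=d_p$. If one had $\vec\beta<\vec e_p$, then, as $\vec e_p$ vanishes before position $p$, the first coordinate of disagreement would be $\le p$: a disagreement before $p$ forces some $\beta_i<0$, and a disagreement at $p$ forces $\beta_i=0$ for all $i\le p$, whence $\sum_{i>p}\beta_id_i=d_p$ is impossible because $d_i>d_p$ for $i>p$. Hence every multidegree of total degree $d_p$ is $\ge\vec e_p$.

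Next I would bootstrap to an arbitrary basis element $z=x^{p_1}_{q_1}\smallsmile\dots\smallsmile x^{p_s}_{q_s}$, for which $\vec\theta(z)=\vec e_{p_1}+\dots+\vec e_{p_s}=:\vec\alpha$. Multiplicativity gives $f^*(z)=f^*(x^{p_1}_{q_1})\smallsmile\dots\smallsmile f^*(x^{p_s}_{q_s})$. Expanding each factor into monomials of multidegree $\ge\vec e_{p_j}$ and multiplying out, every resulting product of monomials is either zero (when a generator repeats, via $x\smallsmile x=0$) or, after reordering with the graded-commutativity signs, $\pm$ a basis monomial whose multidegree is the \emph{sum} of the multidegrees of the factors; the signs and the vanishing relation never change which generators occur, so the multidegree count is unaffected. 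By the additivity of the order this sum is $\ge\vec e_{p_1}+\dots+\vec e_{p_s}=\vec\alpha$. Thus $f^*(z)\in\mathrm{span}\,\EuScript B^{d}(\vec\alpha)$, and more generally, for any $b$ with $\vec\theta(b)\ge\vec\alpha$ we get $f^*(b)\in\mathrm{span}\,\EuScript B^{d}(\vec\theta(b))\subseteq\mathrm{span}\,\EuScript B^{d}(\vec\alpha)$, giving the inclusion $f^*\,\mathrm{span}\,\EuScript B^{d}(\vec\alpha)\subseteq\mathrm{span}\,\EuScript B^{d}(\vec\alpha)$.

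Finally, to promote the inclusion to the stated equality I would apply the identical argument to the inverse automorphism $(f^*)^{-1}$, which is again a graded ring automorphism of $H^*(M;\Z)$ and hence also carries $\mathrm{span}\,\EuScript B^{d}(\vec\alpha)$ into itself; composing the two inclusions forces $f^*\,\mathrm{span}\,\EuScript B^{d}(\vec\alpha)=\mathrm{span}\,\EuScript B^{d}(\vec\alpha)$, valid already over $\Z$. The only genuine bookkeeping, and the step most prone to error, is the sign-and-vanishing analysis when multiplying out the factors; but since I track only multidegrees, it goes through unchanged. I emphasize that the finer lexicographic order used \emph{within} a single block $\EuScript C^{d}(\vec\alpha)$ is irrelevant to this lemma, which concerns solely the coarser filtration indexed by splittings.
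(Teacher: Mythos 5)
Your proof is correct and follows essentially the same route as the paper's: reduce to the observation that $\vec e_p$ is the minimal splitting of $d_p$ (so $f^*$ trivially preserves the bottom of the filtration in degree $d_p$), propagate via multiplicativity of $f^*$ and additivity of the lexicographic order to get one inclusion, and apply the same argument to $(f^*)^{-1}$ for equality. The only difference is that you spell out the minimality of $\vec e_p$ and the sign/vanishing bookkeeping that the paper leaves implicit.
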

\begin{proof}
Note that the lemma becomes obvious if $d=d_p$ and $\vec\alpha$ is the splitting given by $d_p=1 \cdot d_p$. Indeed, since $\vec\alpha$
is the smallest splitting, $\EuScript B^d(\vec\alpha)=\EuScript B^d$.

Now take arbitrary $d\geq 1$ and let $\vec\alpha$ be a splitting of $d$. Take any $z=x_{i_1}^{p_1}\smallsmile x_{i_2}^{p_2}\smallsmile \ldots \smallsmile x_{i_k}^{p_k}\in \EuScript C^d(\vec\alpha)$. Then $ f^*z=f^*x_{i_1}^{p_1}\smallsmile f^*x_{i_2}^{p_2}\smallsmile \ldots \smallsmile f^*x_{i_k}^{p_k}$, because $f^*$ respects the cup product. Each factor $f^*x_{i_j}^{p_j}$ is a linear combination of basis elements from $\EuScript B^{d_{p_j}}$. It follows from the observation made at the beginning of the proof of the lemma that after distributing $f^*z$ becomes a linear combination of basis elements whose splittings are greater than or equal to $\vec\alpha$. Hence $f^*z\in span_\Z \: \EuScript B^d(\vec\alpha)$ and therefore $f^* span_\Z \:\EuScript C^d(\vec\alpha)\subset span_\Z\: \EuScript B^d(\vec\alpha)$ for every splitting $\vec\alpha$. It follows that $f^* span_\Z\: \EuScript B^d(\vec\alpha)\subset span_\Z \:\EuScript B^d(\vec\alpha)$. Applying the same reasoning for $(f^*)^{-1}$ gives the opposite inclusion 
and we obtain 
$$
f^* span_\Z\: \EuScript B^d(\vec\alpha)= span_\Z\: \EuScript B^d(\vec\alpha)
$$
for every splitting $\vec\alpha$.
\end{proof}

\begin{lemma}\label{lemma_even_id}
If $d_p$ is an even degree then there exists $l\geq 1$ such that $(f^*)^l x_q^p = x_q^p$, $q=1, \ldots n_p$.
\end{lemma}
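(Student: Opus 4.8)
The plan is to use Lemma~\ref{lemma_upper_triangular} to reduce the action of $f^*$ in degree $d_p$ to a single $n_p\times n_p$ integer matrix, to show that this matrix is a signed permutation matrix (hence of finite order) using that $d_p$ is even, and finally to clear the resulting lower-order correction terms.

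First I would isolate the relevant invariant subspace. Among all splittings of $d_p$, the splitting $e_p=(0,\dots,0,1,0,\dots,0)$ with the $1$ in the $p$-th slot is the smallest, and every other splitting $\vec\beta$ of $d_p$ has $\beta_p=0$ and uses only spheres of dimension $<d_p$. Hence $W:=span_\Z\{b\in\EuScript B^{d_p}\colon \vec\theta(b)>e_p\}$ is exactly the span of products of generators of degree $<d_p$; in particular it contains none of the $x_q^p$, and by Lemma~\ref{lemma_upper_triangular} it is $f^*$-invariant. The classes $x_1^p,\dots,x_{n_p}^p$ descend to a basis of the quotient $H^{d_p}(M;\Z)/W$, on which $f^*$ induces an automorphism given by a matrix $M_p\in GL(n_p,\Z)$; write $f^*x_q^p=\sum_{q'}(M_p)_{q'q}x_{q'}^p + w_q$ with $w_q\in W$.

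Next I would prove that $M_p$ is a signed permutation matrix. Applying the ring homomorphism $f^*$ to $(x_q^p)^2=0$ gives $(f^*x_q^p)^2=0$. Expanding $(\sum_{q'}(M_p)_{q'q}x_{q'}^p+w_q)^2$ and separating the basis elements of $H^{2d_p}(M;\Z)$ according to how many degree-$d_p$ factors they contain, the ``two $x^p$-factor'' part equals $2\sum_{q'<q''}(M_p)_{q'q}(M_p)_{q''q}\,x_{q'}^p\smallsmile x_{q''}^p$; here the evenness of $d_p$ makes the product commutative, so these classes are nonzero and linearly independent. As this part must vanish on its own, $(M_p)_{q'q}(M_p)_{q''q}=0$ for all $q'\neq q''$, i.e. each column of $M_p$ has a single nonzero entry; since $M_p\in GL(n_p,\Z)$ has an integral inverse, these entries are $\pm1$. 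Thus $M_p$ is a signed permutation matrix, and $M_p^{l}=\Id$ for some $l\geq1$. Replacing $f$ by $f^{l}$, which is still Anosov and induces a ring automorphism, we may assume $f^*x_q^p=x_q^p+w_q$ with $w_q\in W$.

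The main obstacle is to show that the correction term $w_q$ actually vanishes. For this I would again exploit the even-degree relation: from $0=f^*((x_q^p)^2)=(x_q^p+w_q)^2=2\,x_q^p\smallsmile w_q+w_q^2$ together with the fact that $w_q$ is a combination of products of generators of degree $<d_p$ only. Every basis element occurring in $x_q^p\smallsmile w_q$ contains the factor $x_q^p$, whereas no basis element occurring in $w_q^2$ does, so these two families are disjoint and therefore $x_q^p\smallsmile w_q=0$. Finally, writing $H^*(M;\Z)\cong H^*(M';\Z)\otimes\Lambda(x_q^p)$ via the K\"unneth formula, where $M'$ is $M$ with the corresponding $\S^{d_p}$ factor removed, the map $\beta\mapsto x_q^p\smallsmile\beta$ is injective on classes having no $x_q^p$ factor. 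Since $w_q$ is such a class, $x_q^p\smallsmile w_q=0$ forces $w_q=0$, whence $(f^*)^{l}x_q^p=x_q^p$ for $q=1,\dots,n_p$, as desired.
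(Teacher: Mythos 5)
Your argument is correct and follows essentially the same route as the paper: both rest on the invariant filtration of Lemma~\ref{lemma_upper_triangular} and on expanding $f^*x_q^p\smallsmile f^*x_q^p=0$, with the evenness of $d_p$ supplying the factor $2$ in front of the cross terms. The only organizational difference is that you first pass to the quotient by the span of the higher splittings to extract a signed permutation matrix, take a power, and then kill the correction term $w_q$ via injectivity of cupping with $x_q^p$, whereas the paper reads off $b_k=0$ and the monomial shape of the leading block from a single expansion; this is a cosmetic reshuffling, not a different method.
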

\begin{proof}
Let $\vec\alpha$ be the splitting given by $d_p=1\cdot d_p$ and let $\vec\beta$ be the next splitting, \ie the smallest splitting of $d_p$ which is greater than $\vec\alpha$.

Fix $q\in [1, n_p]$. Write $f^*x_q^p$ in the basis of $H^d (M; \Z)$
$$
f^*x_q^p=\sum_{i=1}^{n_p} a_ix_i^p + \sum_{k}b_ky_i^p,
$$
here $y_k^p\in \EuScript B^{d_p}(\vec\beta)$, \ie $y_k^p$-s are non-trivial cup products of the generators. By Lemma~\ref{lemma_upper_triangular}
$$
(f^*)^{-1}(\sum_kb_ky_k^p)\in span_\Z \:\EuScript B^{d_p}(\vec\beta).
$$
Hence  $a_i\neq 0$ for an least one $i$.

The cup product 
$$
f^*x_q^p\smallsmile f^*x_q^p = \sum_{i<j} 2a_ia_jx_i^p\smallsmile x_j^p +\sum_{i, k} 2a_ib_kx_i^p\smallsmile y_k^p + \sum_{k<r} 2b_kb_ry_k^p\smallsmile y_r^p
$$
must vanish. Note that $x_i^p\smallsmile x_j^p \neq 0$ if $i<j$ and that $x_i^p\smallsmile y_k^p\neq 0$. It follows that $b_k=0$ for all $k$ and that there exists exactly one $i=i(q)$ such that $a_{i(q)}\neq 0$.

We conclude that $f^*(span_\Z\:\EuScript C^{d_p}(\vec\alpha))\subset span_\Z\:\EuScript C^{d_p}(\vec\alpha)$. Also recall that by Lemma~\ref{lemma_upper_triangular} $f^*(span_\Z\:\EuScript B^{d_p}(\vec\beta))\subset span_\Z\:\EuScript B^{d_p}(\vec\beta)$.  It follows that $f^*$ has block-diagonal form with one block corresponding to $\EuScript C^{d_p}(\vec\alpha)$ and the other one to $\EuScript B^{d_p}(\vec\beta)$. Hence $f^*\vert_{span_\Z\:\EuScript C^{d_p}(\vec\alpha)}$ is an automorphism of $span_\Z\:\EuScript C^{d_p}(\vec\alpha)\simeq \Z^{d_p}$. It follows that $q\mapsto i(q)$ is a permutation and $a_{i(q)}=\pm 1$. Hence the lemma holds with $l=2d_p!$.
\end{proof}

Now assume that $f\colon M\to M$ is an Anosov diffeomorphism. Let $f^*$ be the induced automorphism on $H^*(M;\Z)$. We denote by $f^{*d}$ the restriction of $f^*$ to $H^d(M; \Z)$. We identify the automorphism $f^{*d}$ with the matrix that represents it in the ordered basis of $H^d(M; \Z)$ that we have chosen earlier. 

First recall that for each $p=1, \ldots m$ the automorphism $f^{*d_p}$ has block-upper-triangular form 
$$
f^{*d_p} = \begin{pmatrix}[c|c] A_p & *\\  \hline  0 & *\end{pmatrix},
$$
where $A_p$ corresponds to the first $n_p$ basis elements $x_1^p, x_2^p,\ldots\quad x_{n_p}^p$. Clearly $\det A_p=\pm 1$. By passing to a finite power of $f$ we can assume that $\det A_p=1, p=1, \ldots m$. Also, by passing to a further finite power, we can assume, by Lemma~\ref{lemma_even_id}, that $A_p = Id$ whenever $d_p$ is even.

Now consider $f^{*d}$ for arbitrary $d$. By Lemma~\ref{lemma_upper_triangular}, $f^{*d}$ has block-upper-triangular form. Each diagonal block corresponds to $\EuScript C^d(\vec\alpha)\subset\EuScript B^d$, where $\vec\alpha$ is a splitting of $d$, $d=\sum_{i=1}^m \alpha_id_i$. We denote this block by $A(\vec\alpha)$.

We say that a splitting $\vec\alpha$ is {\it odd} and we write $\vec\alpha \in \EuScript O$ if $\alpha_p=0$ for all even $d_p$, $p=1, \ldots\quad m$. If $\alpha_p=0$ whenever $d_p$ is odd then we say that splitting $\vec\alpha$ is {\it even}. Recall that $f^*$ preserves the cup product and that the cup product anti-commutes in odd dimension. Hence, for odd $\vec\alpha$
$$
A(\vec\alpha) = \bigotimes_{1\leq p\leq m} A_p^{\wedge \alpha_m}.
$$

Here $A_p^{\wedge \alpha_m}$ is the exterior power of $A_p$ (we set $A_p^{\wedge 0}=Id_\Z$). For a general splitting $\vec\alpha$ the block $A(\vec\alpha)$ is a block-diagonal matrix itself with $\prod_{\substack{1\leq p\leq m; \\ \text{$d_p$ is even}}}\begin{pmatrix} n_p \\ \alpha_p \end{pmatrix}$ identical blocks given by
\begin{equation}
 \label{block}
B(\vec\alpha) = \bigotimes_{\substack{1\leq p\leq m; \\ \text{$d_p$ is odd}}} A_p^{\wedge \alpha_p}
\end{equation}
Notice that if $\vec\alpha$ is even then $B(\vec\alpha)$ is simply a one-by-one identity matrix.

Denote by $e$ the total number of generators of even degree, \ie
$$
e=\sum_{\substack{1\leq p\leq m; \\ \text{$d_p$ is even}}} n_p.
$$

The above observations imply the following statement.
\begin{lemma}\label{lemma_blocks}
Matrices $f^{*d}$, $d=0, 1, \ldots \quad \dim M$, have block-upper-triangular form. Every diagonal block in $f^{*d}$ is either one-by-one identity matrix or $A(\vec\alpha)$, where $\vec\alpha$ is an odd splitting. For each odd splitting $\vec\alpha$ the block $A(\vec\alpha)$ appears $2^e$ times as a diagonal block in $f^{*d}$, $d=0, 1, \ldots\quad \dim M$. Moreover, if a block $A(\vec\alpha)$ appears in $f^{*d'}$ and $f^{*d''}$ then $d'$ and $d''$ have the same parity.
\end{lemma}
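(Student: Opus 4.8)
The first two assertions are essentially a repackaging of the discussion preceding the lemma, so I would begin by recording them. Lemma~\ref{lemma_upper_triangular} shows that $f^*$ preserves each subspace $span_\Z\,\EuScript B^d(\vec\alpha)$; since the chosen basis of $H^d(M;\Z)$ is ordered by the filtration $\EuScript B^d(\text{-})$, this invariance is exactly the statement that $f^{*d}$ is block-upper-triangular, with the diagonal block attached to a splitting $\vec\alpha$ acting on $span_\Z\,\EuScript C^d(\vec\alpha)$; this block is $A(\vec\alpha)$. I would then invoke the decomposition already established, namely that because $A_p=Id$ for every even $d_p$ and the cup product anti-commutes only in odd degrees, each $A(\vec\alpha)$ is itself block-diagonal, consisting of $\prod_{d_p\text{ even}}\binom{n_p}{\alpha_p}$ identical copies of $B(\vec\alpha)=\bigotimes_{d_p\text{ odd}}A_p^{\wedge\alpha_p}$. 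The point to emphasize is that $B(\vec\alpha)$ depends only on the odd entries of $\vec\alpha$: if $\vec\alpha'$ denotes the odd splitting obtained by setting all even entries of $\vec\alpha$ to zero, then $A(\vec\alpha')=B(\vec\alpha)$, using $A_p^{\wedge 0}=Id$. Hence, after this refinement, every diagonal block of $f^{*d}$ equals $A(\vec\alpha')$ for some odd splitting $\vec\alpha'$, and it is the $1\times 1$ identity precisely when $\vec\alpha'$ is trivial (equivalently, when $\vec\alpha$ is even). This proves the first two claims.

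For the counting, I would fix an odd splitting $\vec\alpha$ and tally the copies of $A(\vec\alpha)$ occurring across all $f^{*d}$. A refined block equal to $A(\vec\alpha)$ arises from a splitting $\vec\beta$ (of whatever degree) exactly when the odd part of $\vec\beta$ equals $\vec\alpha$, and each such $\vec\beta$ contributes $\prod_{d_p\text{ even}}\binom{n_p}{\beta_p}$ copies. Summing over all admissible even parts and using $\sum_{k=0}^{n_p}\binom{n_p}{k}=2^{n_p}$ gives
$$
\sum_{\vec\beta}\ \prod_{\substack{1\le p\le m;\\ \text{$d_p$ even}}}\binom{n_p}{\beta_p}
=\prod_{\substack{1\le p\le m;\\ \text{$d_p$ even}}}\ \sum_{k=0}^{n_p}\binom{n_p}{k}
=\prod_{\substack{1\le p\le m;\\ \text{$d_p$ even}}}2^{n_p}=2^{e},
$$
where the first sum runs over all splittings $\vec\beta$ whose odd part equals $\vec\alpha$; this is the asserted count.

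For the parity statement, I would observe that if $A(\vec\alpha)$ appears in $f^{*d}$, then $d=\sum_{p}\beta_p d_p$ for some $\vec\beta$ whose odd part is $\vec\alpha$. The summands with $d_p$ even are all even, while each summand with $d_p$ odd satisfies $\beta_p d_p=\alpha_p d_p\equiv\alpha_p\pmod 2$. Thus $d\equiv\sum_{d_p\text{ odd}}\alpha_p\pmod 2$, a quantity depending only on $\vec\alpha$ and not on the even part of $\vec\beta$. Consequently any two degrees $d'$ and $d''$ in which $A(\vec\alpha)$ appears are congruent mod $2$.

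Since all of the structural input is already available, no step is genuinely hard; the only real care is bookkeeping. The main obstacle I anticipate is keeping the two-layered block structure straight --- the outer layer indexed by splittings $\vec\alpha$ (furnished by Lemma~\ref{lemma_upper_triangular}) and the inner layer splitting each $A(\vec\alpha)$ into copies of $B(\vec\alpha)$ --- and correctly identifying each inner block $B(\vec\alpha)$ with $A(\vec\alpha')$ for the associated odd splitting, so that the combinatorial sum above is indexed over exactly the right set of splittings.
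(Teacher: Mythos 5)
Your proposal is correct and follows essentially the same route as the paper, which gives no separate proof but simply states that the preceding observations (block-upper-triangularity from Lemma~\ref{lemma_upper_triangular}, the decomposition of each $A(\vec\beta)$ into $\prod_{d_p\text{ even}}\binom{n_p}{\beta_p}$ copies of $B(\vec\beta)$, and the definition of $e$) imply the lemma. Your write-up merely makes explicit the bookkeeping the paper leaves implicit --- identifying $B(\vec\beta)$ with $A(\vec\alpha)$ for the odd part $\vec\alpha$ of $\vec\beta$, summing the binomial coefficients to get $2^e$, and reducing $d$ mod $2$ --- and all of these steps are accurate.
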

If $\vec\alpha$ is an odd splitting and block $A(\vec\alpha)$ appears in $f^{*d}$ then we denote the parity of $d$ by $\varepsilon (\vec\alpha)$.
\begin{example}
Let $M=(\S^1)^2\times(\S^2)^2\times(\S^3)^2$. Recall that $A_1\colon \Z^2\to\Z^2$ is the automorphism of $H^1(M; \Z)$ and $A_3\colon \Z^2\to\Z^2$
is the first block of the automorphism $f^{*3}$ coming from $(\S^3)^2$. Then we have the following formulae for $f^{*d}$, $d=0, \ldots\quad 12$,
\begin{align*}
&f^{*0}=Id_\Z, \\
&f^{*1}=A_1,\\
&f^{*2}= \text{upp.tr}(A_1^{\wedge2}, Id_{\Z^2})\stackrel{\mathrm{def}}{=}\begin{pmatrix}[c|c] A_1^{\wedge 2} & *\\ \hline 0 & Id_{\Z^2}\end{pmatrix},\\
&f^{*3}=\text{upp.tr}(A_3, A_1, A_1), \\
&f^{*4}=\text{upp.tr}(A_1\otimes A_3, Id_\Z, A_1^{\wedge 2}, A_1^{\wedge 2}),\\
&f^{*5}=\text{upp.tr}(A_3, A_3, A_1^{\wedge 2}\otimes A_3, A_1),\\
&f^{*6}=\text{upp.tr}(A_3^{\wedge 2}, A_1\otimes A_3, A_1\otimes A_3, A_1^{\wedge 2}),\\
&f^{*7}=\text{upp.tr}(A_1\otimes A_3^{\wedge 2}, A_3, A_1^{\wedge 2}\otimes A_3, A_1^{\wedge 2}\otimes A_3),\\
&f^{*8}=\text{upp.tr}(A_3^{\wedge 2}, A_3^{\wedge 2}, A_1^{\wedge 2}\otimes A_3^{\wedge 2}, A_1\otimes A_3),\\
&f^{*9}=\text{upp.tr}(A_1\otimes A_3^{\wedge 2}, A_1\otimes A_3^{\wedge 2}, A_1^{\wedge 2}\otimes A_3),\\
&f^{*10}=\text{upp.tr}(A_3^{\wedge 2}, A_1^{\wedge 2}\otimes A_3^{\wedge 2}, A_1^{\wedge 2}\otimes A_3^{\wedge 2}),\\
&f^{*11}=A_1\otimes A_3^{\wedge 2},\\
&f^{*12}=A_1^{\wedge 2}\otimes A_3^{\wedge 2}.
\end{align*}
In the above matrices each block that corresponds to an odd splitting appears exactly 4 times. Note that $A_1^{\wedge 2}=A_3^{\wedge 2}=Id_\Z$, but we still write $A_1^{\wedge 2}$ and $A_3^{\wedge 2}$ for clarity.
\end{example}

\begin{proof}[Proof of Theorem~\ref{thm_spheres}]
We can assume that the unstable distribution is oriented. Otherwise we can pass to a double cover as in the proof of Theorem~\ref{thm_product}. Note that the double cover is also a product of spheres of the same dimensions. 

Given an odd splitting $\vec\alpha$ let 
$$
\lambda(\vec\alpha)=\min\{|\mu|\colon \mu\in\mathbb C, \det(A(\vec\alpha)-\mu Id)=0\}
$$
and let 
$$
\lambda=\min_{\vec\alpha\in\EuScript O} \lambda (\vec\alpha).
$$
Consider the set of all odd splittings $\vec\alpha^1, \vec\alpha^2, \ldots\quad \vec\alpha^r$ that achieve this minimum, \ie $\lambda(\vec\alpha^1)= \lambda(\vec\alpha^2)= \ldots= \lambda(\vec\alpha^r)=\lambda$. It is easy to see that the following asymptotic formula holds along a subsequence $\{l_i; i\ge 1\} (l_i\to\infty, i\to\infty)$
$$
Tr(A(\vec\alpha^k)^{-l_i})=q_k\lambda^{-l_i} + o (\lambda^{-l_i}), i\to\infty,
$$
where $q_k$ is an integer given by the ``multiplicity'' of $\lambda$ in $A(\vec\alpha^k)$, $k= 1,\ldots\quad r$.

The trace $Tr(f^{*d})^{-l}$ is the sum of traces of the diagonal blocks of $(f^{*d})^{-l}$. Hence, using Lemma~\ref{lemma_blocks}, we obtain the following expression for the Lefschetz number
\begin{multline*}
\Lambda(f^{l_i})=\sum_{d=0}^{\dim M}(-1)^d Tr(f^{*d})^{-l_i}
=\sum_{\vec\alpha\in\EuScript O}(-1)^{\varepsilon(\vec\alpha)}2^eTr(A(\vec\alpha)^{-l_i})+\text{const}\\
=2^e\biggl(\sum_{k=1}^r(-1)^{\varepsilon(\vec\alpha^k)}q_k\biggr)\lambda^{-l_i}+o(\lambda^{-l_i}), i\to\infty.
\end{multline*}
Let $w=\sum_{k=1}^r(-1)^{\varepsilon(\vec\alpha^k)}q_k$. We obtain
$$
\left|Fix(f^{l_i})\right|=2^e|w|\lambda^{-l_i}+o(\lambda^{-l_i}), i\to\infty. 
$$
Recall that $e\ge 1$ by the assumption of the theorem. If $w\neq 0$ then, by comparing the above asymptotic formula with~(\ref{form_asymp_transitive}), we conclude that $f$ is not transitive.

If $w=0$ then we need to proceed to $\lambda_2$ --- the second smallest absolute value of the eigenvalues of the blocks --- and apply the same argument again. If the coefficient by $\lambda_2^{l_i}$ vanishes as well then we proceed to the third smallest absolute value and so on. Since $|Fix(f^l)|$ grows exponentially and there are only finitely many blocks $A(\vec\alpha)$ each of which has finitely many eigenvalues, this process will terminate in $s$  steps yielding the formula
$$
\left|Fix(f^{l_i})\right|=2^e|w_s|\lambda_s^{-l_i}+o(\lambda_s^{-l_i}), i\to\infty,
$$
where $\lambda_s<1$ and $w_s\neq 0$. Hence $f$ is not transitive.
\end{proof}
\begin{proof}[Proof of Theorem~\ref{thm_spheres2}]
Recall that the odd-dimensional sphere $\S^k=\S^{d_p}$ enters the product exactly once. Let $d\ge 0$. We will write $\vec\alpha\in\D(d)$ to indicate that $\vec\alpha$ is a splitting of $d$. Also we will write $\vec\alpha\in\D_-(d)$ if $\alpha_p=0$ and $\vec\alpha\in\D_+(d)$ if $\alpha_p=1$. Note that given $\vec\alpha\in\D_-(d)$ we can obtain $\vec\alpha_\bullet\in\D_+(d+k)$ by changing $\alpha_p$ from $0$ to $1$. Hence there is a bijective correspondence between the set $\cup_{0\le d\le\dim M}\D_-(d)$ and the set $\cup_{0\le d\le\dim M}\D_+(d)$.

The first block of $f^{*k}$ --- matrix $A_p$ --- is a one-by-one identity matrix. Hence the formula~(\ref{block}) and the block count imply that for any $\vec\alpha\in\D_-(d)$ $\;A(\vec\alpha)=A(\vec\alpha_\bullet)$. We proceed with the calculation of the Lefschetz number.
\begin{multline*}
\Lambda(f^l)=\sum_{d=0}^{\dim M}(-1)^d Tr(f^{*d})^{-l}=\sum_{d=0}^{\dim M}(-1)^d\sum_{\vec\alpha\in\D(d)}A(\vec\alpha)^{-l}\\
=\sum_{d=0}^{\dim M}(-1)^d\biggl(\sum_{\vec\alpha\in\D_+(d)}A(\vec\alpha)^{-l}+\sum_{\vec\alpha\in\D_-(d)}A(\vec\alpha)^{-l}\biggr)\\
=\sum_{d=0}^{\dim M}(-1)^d\biggl(\sum_{\vec\alpha\in\D_+(d)}A(\vec\alpha)^{-l}+\sum_{\vec\alpha_\bullet\in\D_+(d+k)}A(\vec\alpha_\bullet)^{-l}\biggr)\\
=\sum_{d=0}^{\dim M}(-1)^d\sum_{\vec\alpha\in\D_+(d)}A(\vec\alpha)^{-l}-\sum_{d=0}^{\dim M}(-1)^{d+k}\sum_{\vec\alpha\in\D_+(d+k)}A(\vec\alpha)^{-l}\\
=\sum_{d=k}^{\dim M}(-1)^d\sum_{\vec\alpha\in\D_+(d)}A(\vec\alpha)^{-l}-\sum_{d=k}^{\dim M+k}(-1)^{d}\sum_{\vec\alpha\in\D_+(d)}A(\vec\alpha)^{-l}=0
\end{multline*}
We have used the fact that $\D_+(d)=\varnothing$ for $d<k$ and $d>\dim M$. Therefore $Fix(f^l)=\varnothing$ which is a contradiction.
\end{proof}

\section{Betti numbers and Ruelle-Sullivan cohomology classes}
\label{sec_RS}
\begin{theorem}[\cite{RS}] 
\label{tmRS}
Let $f\colon M\rightarrow M$ be a transitive Anosov diffeomorphism of a closed $n$-dimensional manifold $M$ with orientable invariant distributions. Denote by $k$ the dimension of the stable bundle $E^s$. There exist non-zero cohomology classes $s\in H^k(M, \mathbb R)$ and $u\in H^{n-k} (M, \mathbb R)$ such that
\begin{enumerate}
\item 
$f^*s=\pm\lambda s$, $f^*u=\pm\lambda^{-1} u$, where
$\lambda = e^{-{h_{top}(f)}} < 1$.
\item
$s\smallsmile u =\mu$, where $\mu$ is the measure of maximal entropy for $f$.
\end{enumerate}
\end{theorem}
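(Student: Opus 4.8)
The plan is to realize $u$ and $s$ as Poincar\'e duals of two closed de Rham currents built from the unstable and stable foliations $W^u$ and $W^s$ of $f$, following the philosophy of Ruelle and Sullivan. The orientability hypotheses give leafwise orientations of $W^u$ and $W^s$, which is precisely what is needed to integrate forms along leaves with a coherent sign. The dynamical input I would quote is Bowen's theorem that a transitive Anosov diffeomorphism carries a unique measure of maximal entropy $\mu$, together with the Margulis description of $\mu$: in charts adapted to the splitting $TM=E^s\oplus E^u$ one has a local product structure $d\mu=dm^u\,dm^s$, where $\{m^u\}$ and $\{m^s\}$ are canonical systems of leafwise measures on unstable and stable leaves. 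These families are invariant under the holonomy of the transverse foliation and satisfy the scaling relations
\[
f_*m^u=\lambda\,m^u,\qquad f_*m^s=\lambda^{-1}m^s,\qquad \lambda=e^{-h_{top}(f)}<1 .
\]

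\emph{Construction of the currents.} Viewing $W^u$ as an $(n-k)$-dimensional oriented foliation equipped with the holonomy-invariant transverse measure supplied by the stable leafwise measures $m^s$, I would define a current $T^u$ of dimension $n-k$ by the Ruelle--Sullivan recipe: in a foliation box, integrate an $(n-k)$-form $\omega$ over each unstable plaque using the leafwise orientation, and then integrate the resulting function of the transverse coordinate against $m^s$; a partition of unity assembles these local contributions into a global current. The key point, and the technical heart of the argument, is that holonomy invariance of the transverse measure forces $\partial T^u=0$, so $T^u$ determines a class $[T^u]\in H_{n-k}(M;\R)$. Running the same construction with the roles of the foliations exchanged produces a closed current $T^s$ of dimension $k$, built from $W^s$ and the transverse measure $m^u$, with $[T^s]\in H_k(M;\R)$. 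I then set
\[
s=\mathrm{PD}^{-1}[T^u]\in H^k(M;\R),\qquad u=\mathrm{PD}^{-1}[T^s]\in H^{n-k}(M;\R).
\]

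\emph{Eigenvalues.} To compute $f^*s$ I would push $T^u$ forward: since $f$ carries unstable plaques to unstable plaques preserving the leafwise orientation (after replacing $f$ by $f^2$ if it reverses it, which is the source of the sign $\pm$), the change of variables in the leafwise integral combines with the transformation rule $f_*m^s=\lambda^{-1}m^s$ to give $f_*T^u=\lambda^{-1}T^u$. Poincar\'e duality intertwines $f_*$ on $H_{n-k}$ with $(f^*)^{-1}$ on $H^k$ (exactly the inverse-transpose relation of Lemma~\ref{lemma_duality}), so $f^*s=\lambda s$. The identical computation for $T^s$, using $f_*m^u=\lambda\,m^u$, yields $f_*T^s=\lambda\,T^s$ and hence $f^*u=\lambda^{-1}u$, which is part (1).

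\emph{The product.} Finally, $T^u$ and $T^s$ have complementary dimensions and, because $E^u\oplus E^s=TM$, they meet transversally at every point; their geometric intersection is therefore a $0$-current, i.e. a measure, and the local product structure $d\mu=dm^u\,dm^s$ identifies this intersection measure with $\mu$ itself. Under Poincar\'e duality the intersection pairing of $[T^u]$ and $[T^s]$ is the cup product $s\smallsmile u$, so $s\smallsmile u=\mu$, which is part (2); in particular $\mu\neq0$ shows that $s$ and $u$ are non-zero. The step I expect to be the main obstacle is making the current-theoretic construction rigorous despite the fact that $W^s$ and $W^u$ are only transversally H\"older: one must establish closedness of $T^u$ and $T^s$ directly from holonomy invariance of the transverse measures, and match the transverse intersection of the two currents with the measure-theoretic product $m^u\times m^s$. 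Once these two points are secured, the eigenvalue and sign bookkeeping is routine.
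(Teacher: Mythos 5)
The paper does not prove this statement: it is imported verbatim from Ruelle and Sullivan \cite{RS}, so there is no internal proof to compare against. Your sketch is a faithful outline of the original Ruelle--Sullivan argument --- foliation currents weighted by the Margulis transverse measures, closedness of the currents from holonomy invariance, the eigenvalue computation from the scalings $f_*m^u=\lambda m^u$ and $f_*m^s=\lambda^{-1}m^s$, and the identification of the transverse intersection of the two currents with $\mu$ --- and the bookkeeping checks out: the class $s\in H^k$ must indeed be the Poincar\'e dual of the \emph{unstable} current (of dimension $n-k$), the naturality of the cap product converts $f_*[T^u]=\lambda^{-1}[T^u]$ into $f^*s=\lambda s$ exactly as in Lemma~\ref{lemma_duality}, and nonvanishing of $s$ and $u$ follows from $s\smallsmile u=\mu\neq 0$. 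The two points you single out as the technical heart (closedness despite the foliations being only H\"older, and matching the geometric intersection with the product structure of $\mu$) are precisely the ones carried out in \cite{RS}, so the proposal is correct as a proof outline.
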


\begin{prop}
\label{prop1}[cf. Proposition~\ref{prop0}]
Let $f\colon M \rightarrow M$ be a codimension $k$ transitive Anosov diffeomorphism with orientable invariant distributions. Then $k$-th Betti number $b_k(M)\geq 2$.
\end{prop}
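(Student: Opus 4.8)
The plan is to play the single eigenvalue produced by the Ruelle--Sullivan Theorem~\ref{tmRS} against the integrality of the action of $f^*$ on cohomology. First I would apply Theorem~\ref{tmRS} to obtain a non-zero class $s\in H^k(M;\R)$ with $f^*s=\pm\lambda s$, where $\lambda=e^{-h_{top}(f)}<1$. Since $s\neq 0$, this already forces $b_k(M)\geq 1$, so the entire content of the proposition is to rule out the possibility $b_k(M)=1$.

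Suppose for contradiction that $b_k(M)=1$. Then $H^k(M;\R)$ is one-dimensional and is spanned by $s$, so $f^*$ acts on $H^k(M;\R)$ simply as multiplication by the scalar $\pm\lambda$. On the other hand, $f$ is a diffeomorphism, so $f^*$ restricts to an automorphism of the free abelian group $H^k(M;\Z)/\mathrm{torsion}\cong\Z$; every such automorphism lies in $GL_1(\Z)=\{\pm1\}$. After tensoring with $\R$, the induced action of $f^*$ on $H^k(M;\R)$ is therefore multiplication by $\pm1$. Comparing the two descriptions yields $\pm\lambda=\pm1$, i.e. $|\lambda|=1$, contradicting $\lambda<1$.

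Hence $b_k(M)\geq 2$, as claimed. Equivalently, one may phrase the final step in terms of determinants: since $\det\big(f^*|_{H^k(M;\Z)/\mathrm{torsion}}\big)=\pm1$, the product of the eigenvalues of $f^*$ on $H^k(M;\R)$ has absolute value $1$; the Ruelle--Sullivan eigenvalue $\pm\lambda$ has $|\lambda|<1$, so there must exist at least one further eigenvalue (whose modulus then exceeds $1$), which again forces $\dim H^k(M;\R)\geq 2$.

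I do not expect any serious obstacle once Theorem~\ref{tmRS} is in hand. The only point requiring a little care is the passage from the integral action to the real action, together with the elementary observation that a one-dimensional integral automorphism can only be $\pm1$ and hence cannot exhibit an eigenvalue of modulus strictly less than one --- which is precisely the feature that the Ruelle--Sullivan class supplies.
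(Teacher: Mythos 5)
Your proposal is correct and follows essentially the same route as the paper: invoke Theorem~\ref{tmRS} to get an eigenvalue of modulus $\lambda<1$ (equivalently $\lambda^{-1}>1$), and contradict it with the fact that the induced automorphism of $H^k(M;\Z)/\mathrm{torsion}\cong\Z$ can only be $\pm 1$ when $b_k(M)=1$. The paper merely phrases this in homology via the Universal Coefficients Theorem and naturality of Poincar\'e duality, whereas you stay in cohomology where the Ruelle--Sullivan class lives; the substance is identical.
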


\begin{proof}[Proof of Proposition~\ref{prop1}] Denote by $f_*^\Z$ and $f_*^\R$ the automorphisms induced by $f$ on integral and real homology respectively. Let $Tor$ be the torsion subgroup of $H_k(M; \mathbb Z)$. Then the integral homology splits

$$H_k(M, \mathbb Z) \simeq \mathbb Z ^p \oplus Tor,$$
By the Universal Coefficients Theorem

$$H_k(M, \mathbb R)\simeq H_k(M, \mathbb Z) \otimes \mathbb R \simeq \mathbb R^p.$$

This isomorphism is natural. Therefore the following diagram commutes
$$
\begin{CD}
(H_k(M; \Z)/Tor)\otimes\R@= H_k(M; \Z)\otimes\R@>>> H_k(M; \R)\\
@V{\hat f_*^\Z\otimes id_\R}VV @V{f_*^\Z\otimes id_\R}VV @V{f_*^\R}VV\\
(H_k(M; \Z)/Tor)\otimes\R@= H_k(M; \Z)\otimes\R@>>> H_k(M; \R)
\end{CD}
$$
Now assume that $p = b_k(M) =1$. Then $\hat f_*^\Z=\pm id$ and from the above diagram we obtain $f_*^\R=\pm id$. This contradicts to the fact that, by naturality of Poincar\'e duality, $\lambda^{-1}>1$ given by Theorem~\ref{tmRS} is an eigenvalue of $f_*^\mathbb R\colon H_k(M; \R)\to H_k(M; \R)$.
\end{proof}

\section{Betti numbers and characteristic classes of invariant distributions}
\label{sec_char_class}
Recall that a characteristic class $c$ is a natural assignment of a cohomology class 
$$c(E)\in H^k(B; \R)$$ 
to each oriented $k$-dimensional, $k\geq0$, vector bundle $p\colon E \rightarrow B$. Naturality means that if $\tilde f\colon E_1 \rightarrow E_2$ is a bundle map that covers $f\colon B_1 \rightarrow B_2$ then
$$
f^*(c(E_2))=c(E_1).
$$

We say that class $c$ has the {\it exponential property} if for any two bundles $p_1\colon E_1\rightarrow B$ and $p_2\colon E_2\rightarrow B$
$$
c(E_1\oplus E_2) =c(E_1)\smallsmile c(E_2).
$$
For further background on characteristic classes we refer to ~\cite{MSt}.

\begin{theorem}
\label{tmMain}
Assume that $f\colon M^n\rightarrow M^n$ is a codimension $k$ transitive Anosov diffeomorphism with orientable invariant distributions. Assume that $c$ is a characteristic class that satisfies the exponential property and $c(TM)\neq 0$. Then the $k$-th Betti number $b_k(M)\geq 3$. 
\end{theorem}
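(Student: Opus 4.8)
The plan is to sharpen the bound $b_k(M)\ge 2$ of Proposition~\ref{prop1} to $b_k(M)\ge 3$ by producing, inside $H^k(M;\R)$, an eigenvector of $f^*$ whose eigenvalue has absolute value $1$, transverse to the Ruelle--Sullivan eigenvector whose eigenvalue has absolute value $\lambda<1$, and then playing these against integrality of $f^*$. First I normalize so that $\dim E^s=k$: if instead $\dim E^u=k$ one replaces $f$ by $f^{-1}$, which is again a transitive Anosov diffeomorphism with orientable distributions, leaves $b_k(M)$ and the hypothesis $c(TM)\ne0$ untouched, and has stable bundle $E^u$ of dimension $k$ (the rest of the argument is then run with $c(E^u)$ in place of $c(E^s)$). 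With this normalization I apply Theorem~\ref{tmRS} to the transitive $f$ to obtain a nonzero $s\in H^k(M;\R)$ with $f^*s=\pm\lambda s$, $\lambda=e^{-h_{top}(f)}<1$; so $s$ is an eigenvector of $f^*|_{H^k(M;\R)}$ whose eigenvalue has absolute value $\lambda<1$.

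Next I would feed in the characteristic class. Since $df$ preserves the splitting $TM=E^s\oplus E^u$, the exponential property gives $c(TM)=c(E^s\oplus E^u)=c(E^s)\smallsmile c(E^u)$ with $c(E^s)\in H^k(M;\R)$ and $c(E^u)\in H^{n-k}(M;\R)$. As $c(TM)\ne0$, both factors are nonzero; in particular $c(E^s)\ne0$. Now $df|_{E^s}\colon E^s\to E^s$ is a bundle automorphism covering $f$, and on the connected base $M$ it either globally preserves or globally reverses the orientation of the oriented bundle $E^s$; hence $d(f^2)|_{E^s}$ preserves it. Naturality of $c$ applied to this bundle map yields $(f^*)^2c(E^s)=c(E^s)$, so the nonzero class $c(E^s)$ lies in $\ker\big((f^*)^2-\mathrm{Id}\big)$ on $H^k(M;\R)$. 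Consequently $f^*|_{H^k(M;\R)}$ admits a nonzero eigenvector $v$ with eigenvalue $\pm1$. (For $c$ the Euler class this $v$ is simply $e(E^s)$, with eigenvalue $+1$ or $-1$ according to whether $f$ preserves or reverses the orientation of $E^s$.)

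Finally I would run the integrality argument. The eigenvectors $s$ and $v$ of $f^*|_{H^k(M;\R)}$ have eigenvalues of absolute values $\lambda<1$ and $1$, so they are linearly independent and $b_k(M)\ge2$. Suppose $b_k(M)=2$. Then $s$ and $v$ span $H^k(M;\R)$, the full eigenvalue multiset of $f^*|_{H^k(M;\R)}$ is $\{\pm\lambda,\pm1\}$, and therefore
$$
\big|\det\big(f^*|_{H^k(M;\R)}\big)\big|=\lambda\neq1.
$$
On the other hand $f^*$ is an automorphism of $H^k(M;\Z)$ and so induces an element of $GL\big(b_k(M),\Z\big)$ on the free quotient $H^k(M;\Z)/\mathrm{Tor}$; its determinant, which equals $\det\big(f^*|_{H^k(M;\R)}\big)$, must be $\pm1$. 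This contradiction excludes $b_k(M)=2$, whence $b_k(M)\ge3$.

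I expect the only delicate points to be bookkeeping rather than conceptual. One is making sure the Ruelle--Sullivan class genuinely sits in degree $k=\dim E^s$, which the $f\mapsto f^{-1}$ normalization takes care of. The other is controlling the orientation behavior of $c(E^s)$ under $f$; I handle this at the level of bundle maps by passing to $f^2$, which is harmless because transitivity is invoked only to produce $s$ and is not needed for the naturality computation $(f^*)^2c(E^s)=c(E^s)$. The genuinely new ingredient beyond Proposition~\ref{prop1} is the eigenvalue-$(\pm1)$ class furnished by the characteristic class, and the step that actually forces the third dimension is the determinant computation above.
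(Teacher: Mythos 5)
Your proof is correct and follows essentially the same route as the paper's: the characteristic class $c(E^s)$ supplies an eigenvalue of modulus $1$ on $H^k(M;\R)$, Theorem~\ref{tmRS} supplies an eigenvalue of modulus $\lambda<1$, and the fact that $f^*$ preserves the lattice $H^k(M;\Z)/\mathrm{Tor}$ forces $|\det f^*|=1$ and hence a third eigenvalue. Your extra care with the $f\mapsto f^{-1}$ normalization and the passage to $f^2$ for orientation is exactly the bookkeeping the paper handles implicitly.
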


\begin{proof}
Without loss of generality we can assume that $\dim E^s=k$. By the exponential property
$$
c(E^s)\smallsmile c(E^u) = c(E^s \oplus E^u) = c(TM) \in H^n(M; \mathbb R) = \mathbb R.
$$

Since $c(TM)\neq 0$ we have that $c(E^s)\neq 0.$

Choose an orientation on $E^s$. We can assume that $df\colon E^s\rightarrow E^s$ preserves this orientation (otherwise pass to $f^2$). Therefore, by naturality
$$
f^*c(E^s)=c(E^s).
$$
Hence $f^*\colon H^k(M; \mathbb R)\rightarrow H^k(M; \mathbb R)$ has eigenvalue~1. Recall that by Theorem~\ref{tmRS} $f^*$ has an eigenvalue $\lambda \in (0, 1).$ Note that $f^*$ can also be viewed as an automorphism of $H^k(M; \mathbb Z)/Torsion$. Therefore, $det f^*=1$, which implies that $f^*$ also has an eigenvalue $>1$. Hence $\dim H^k(M; \mathbb R)\geq 3$.
\end{proof}

\begin{cor}
\label{cor2}
If a compact oriented manifold $M$ has non-zero Euler characteristic $\chi(M)$ and Betti numbers $b_i(M)\leq 2$, $i\geq 1$. Then $M$ does not admit transitive Anosov diffeomorphisms with orientable invariant distributions. 
\end{cor}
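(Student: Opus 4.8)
The plan is to deduce this corollary directly from Theorem~\ref{tmMain} by making one concrete choice of characteristic class. The natural candidate is the Euler class $e$ with real coefficients: to each oriented vector bundle $p\colon E\to B$ it assigns a class $e(E)\in H^{\dim E}(B;\R)$. It is well-defined in our situation precisely because the hypotheses have been arranged so that all the relevant bundles are oriented --- the invariant distributions $E^s$ and $E^u$ are orientable by assumption, and $TM=E^s\oplus E^u$ is oriented since $M$ is oriented.

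First I would verify that $e$ is admissible for Theorem~\ref{tmMain}. The exponential property is exactly the Whitney sum formula for the Euler class, $e(E_1\oplus E_2)=e(E_1)\smallsmile e(E_2)$, valid over any base. The nonvanishing hypothesis $c(TM)\neq 0$ is where the assumption $\chi(M)\neq 0$ enters: evaluating the top class $e(TM)$ on the fundamental class gives the Euler number, $\langle e(TM),[M]\rangle=\chi(M)\neq 0$, so $e(TM)\neq 0$ in $H^n(M;\R)\cong\R$.

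With these checks in place, suppose for contradiction that $f\colon M\to M$ is a transitive Anosov diffeomorphism with orientable invariant distributions, and let $k$ be its codimension, say $\dim E^s=k$. An Anosov diffeomorphism of a closed manifold has both distributions nontrivial, so $k\ge 1$. Applying Theorem~\ref{tmMain} with $c=e$ then yields $b_k(M)\ge 3$, which contradicts the standing hypothesis $b_i(M)\le 2$ for all $i\ge 1$. This contradiction proves the corollary.

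The only point requiring any care --- and the nearest thing to an obstacle in an otherwise formal argument --- is ensuring that the nonvanishing of $c(TM)$ propagates to $c(E^s)$, since it is $c(E^s)\neq 0$ that drives the eigenvalue count in Theorem~\ref{tmMain}. This is already supplied inside that proof, where $c(E^s)\smallsmile c(E^u)=c(TM)\neq 0$ forces $c(E^s)\neq 0$; in the Euler-class case it also forces $k$ to be even, which is consistent and needs no separate treatment. Everything else reduces to citing the Whitney sum formula and the Gauss--Bonnet--Chern identity $\langle e(TM),[M]\rangle=\chi(M)$.
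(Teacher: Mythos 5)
Your proposal is correct and follows exactly the paper's own route: apply Theorem~\ref{tmMain} with $c$ the Euler class, using $\langle e(TM),[M]\rangle=\chi(M)\neq 0$ to verify the nonvanishing hypothesis and the Whitney sum formula for the exponential property, so that $b_k(M)\ge 3$ contradicts $b_i(M)\le 2$. No gaps; the extra verifications you include are sound but already implicit in the paper's one-line argument.
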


\begin{proof}
Let $e$ be the Euler class (see, {\it e.g.}, ~\cite{MSt}). Then $e(TM)=\chi (M)[M]$ , where $[M]$ is the fundamental class of $M$. Thus Theorem~\ref {tmMain} applies --- if $M$ admits a transitive Anosov diffeomorphism with orientable invariant distributions then at least one Betti number is greater than 2. 
\end{proof}

Observe that any distribution on a simply connected manifold is orientable. Hence Theorem~\ref{thm_euler} follows from Corollary~\ref {cor2}. 

\begin{cor}
Let $\mathbb CP^n$, $n\ge 2$, be the complex projective space. Then manifolds $\mathbb CP^n\times\ S^1$ and $\mathbb CP^n\times \mathbb T^2$ do not admit transitive Anosov diffeomorphisms.
 \end{cor}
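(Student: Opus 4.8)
The plan is to handle the two manifolds by different arguments, since both have vanishing Euler characteristic and so Corollary~\ref{cor2} does not apply directly. For $\mathbb{CP}^n\times\S^1$ the statement is in fact soft: by the K\"unneth formula the Poincar\'e polynomial is $(1+t^2+\cdots+t^{2n})(1+t)=1+t+t^2+\cdots+t^{2n+1}$, so every Betti number equals $1$. Hence Proposition~\ref{prop0} already shows that $\mathbb{CP}^n\times\S^1$ supports no Anosov diffeomorphism whatsoever, which is stronger than the claim. The remaining (and real) work is $\mathbb{CP}^n\times\mathbb{T}^2$, where $b_i\le 2$ but $b_1=2$, so neither Proposition~\ref{prop0} nor Corollary~\ref{cor2} is available.

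For $\mathbb{CP}^n\times\mathbb{T}^2$ I would write $H^*(\mathbb{CP}^n\times\mathbb{T}^2;\R)\cong \R[A]/(A^{n+1})\otimes\Lambda(B_1,B_2)$, with $A\in H^2$ pulled back from $\mathbb{CP}^n$ and $B_1,B_2\in H^1$ from $\mathbb{T}^2$, and suppose $f$ is a transitive Anosov diffeomorphism. As in the earlier proofs, I first reduce to orientable $E^u$ with $f$ orientation-preserving by passing to a finite cover and then to a power; the orienting cover only covers the torus factor and is again of the form $\mathbb{CP}^n\times\mathbb{T}^2$, Lemma~\ref{lemma1} keeps the lift transitive, and passing to powers is harmless because a transitive Anosov diffeomorphism of a connected manifold is topologically mixing.

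The crux is the rigidity statement $f^*A=\pm A$. Writing $f^*A=\alpha A+\beta(B_1B_2)$ and $f^*(B_1B_2)=\gamma A+\delta(B_1B_2)$ for an integral determinant-$\pm1$ matrix on $H^2=\langle A,B_1B_2\rangle$, I would expand the ring relations $(B_1B_2)^2=0$ and $A^{n+1}=0$ under the homomorphism $f^*$; since $A^2$, $A\smallsmile B_1B_2$ and $A^n\smallsmile B_1B_2$ are non-zero and lie in distinct K\"unneth summands, this forces $\gamma=\beta=0$ and $\alpha,\delta\in\{\pm1\}$. This is exactly where $n\ge2$ enters, as $A^2=0$ when $n=1$. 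After one more square I may assume $f^*A=A$ and $\det\!\big(f^*|_{H^1}\big)=1$. Because $H^1$ is spanned by the torus classes, $f^*$ then acts on the whole ring as $\phi\otimes\psi$ with $\phi=f^*|_{H^*(\mathbb{CP}^n)}=\mathrm{id}$ and $\psi=f^*|_{H^*(\mathbb{T}^2)}$ induced by $P:=f^*|_{H^1}\in SL(2,\Z)$. As the super-trace $\sum_k(-1)^k\mathrm{Tr}$ of a graded tensor product factors, the Lefschetz formula~(\ref{lef_formula}) yields
$$
|Fix(f^l)|=(n+1)\,\bigl|\det(I-P^{-l})\bigr|.
$$
Exponential growth of the periodic points forces $P$ to be hyperbolic (otherwise the right-hand side is bounded), and then the leading coefficient of $|Fix(f^l)|$ equals $n+1\ge3$, contradicting~(\ref{form_asymp_transitive}), where it must be $1$.

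The main obstacle is the rigidity step $f^*A=\pm A$: the subsequent Lefschetz factorization is a routine graded-trace computation, but the contradiction appears only after the induced automorphism is pinned down on the $\mathbb{CP}^n$-generator, which is precisely where the cup-product structure of $\mathbb{CP}^n$ (hence $n\ge2$) is indispensable. A secondary point requiring care is that passing to a power must not destroy the transitive counting formula~(\ref{form_asymp_transitive}); this is legitimate because transitivity of an Anosov diffeomorphism on a connected manifold is equivalent to topological mixing and therefore persists under iteration.
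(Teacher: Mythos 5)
Your proof is correct, but it takes a genuinely different route from the paper's. The paper does not treat the two factors separately and does not touch the cohomology ring action at all: it notes that $p_1(T\mathbb{CP}^n)=(n+1)a\neq 0$, that Pontrjagin classes are stable so $p_1(TM)\neq 0$ for $M=\mathbb{CP}^n\times\mathbb{T}^i$, and that the full Pontrjagin class has the exponential property; Theorem~\ref{tmMain} then forces some Betti number of $M$ to be at least $3$, contradicting the direct computation that all Betti numbers of $M$ are at most $2$. (You correctly observed that Corollary~\ref{cor2} is unavailable because $\chi(M)=0$, but overlooked that Theorem~\ref{tmMain} itself can be fed a characteristic class other than the Euler class.) Your argument instead handles $\mathbb{CP}^n\times\S^1$ by Proposition~\ref{prop0} --- which in fact yields the stronger conclusion that no Anosov diffeomorphism, transitive or not, exists there --- and handles $\mathbb{CP}^n\times\mathbb{T}^2$ by a rigidity computation in the cohomology ring ($f^*A=\pm A$, correctly using $A^2\neq 0$, i.e.\ $n\ge 2$, to kill the off-diagonal entries on $H^2$) followed by the factorization of the Lefschetz number as $(n+1)\det(I-P^{-l})$ and a comparison with~(\ref{form_asymp_transitive}); this is in the spirit of the paper's proofs of Theorems~\ref{thm_product}--\ref{thm_bundle} rather than of Section~\ref{sec_char_class}. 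The trade-off: the paper's characteristic-class argument is shorter and uniform in $i\in\{1,2\}$, while yours is self-contained at the level of the cup-product structure, identifies the exact leading coefficient $n+1\ge 3$ in the periodic-orbit count, and sharpens the conclusion for the $\S^1$ factor. Your auxiliary reductions (the orienting cover of $E^u$ is again of the form $\mathbb{CP}^n\times\mathbb{T}^i$, Lemma~\ref{lemma1} preserves transitivity of the lift, and transitivity survives passage to powers because a transitive Anosov diffeomorphism of a connected manifold is topologically mixing) are all sound.
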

\begin{proof}
 Let $M=\mathbb CP^n\times\ \mathbb T^i$ for some $n\ge 2$ and $i\in\{1, 2\}$. Assume that $M$ admits an Anosov diffeomorphism $f$. Since every finite covering of $M$ is a self-covering we can assume that $f$ has orientable invariant distributions. Recall the following formula for the first Pontrjagin class (see, \eg~\cite[Chapter 15]{MSt})
 $$
 p_1(T\mathbb CP^n)=(n+1)a,
 $$
 where $a$ is a generator of $H^2(\mathbb CP^n)$. Since Pontrjagin classes are stable we have $p_1(TM)=p_1(T\mathbb CP^n)\neq 0$. The full Pontrjagin class satisfies the exponential property, hence, by Theorem~\ref{tmMain}, we get that $b_k(M)\ge 3$. However a direct calculation shows that the Betti numbers of $M$ are at most 2.
\end{proof}

\section{Anosov diffeomorphisms on $(2n-1)$-connected $4n$-manifolds.}
Let $M$ be a $(2n-1)$-connected $4n$-dimensional closed oriented manifold. Then $H^i(M;\Z)=0$ for $i\neq 0, 2n, 4n$. Let $[M]$ be the fundamental class of $M$ and let $N$ be the $2n$-th Betti number of $M$. We identify $H^{2n}(M;\Z)$ with $\Z^N$. The {\it intersection form} $Q\colon H^{2n}(M;\Z)\times H^{2n}(M;\Z)\to \Z$ is defined by
$$
Q(x,y)=\langle x\smallsmile y, [M]\rangle.
$$
Form $Q$ is a symmetric bilinear form represented by an $N\times N$ matrix, which we also denote by $Q$ 
$$Q(x,y)=x^tQy.$$ Poincar\'e duality implies that $\det Q=\pm 1$ (see, \eg~\cite{Sc} for the proof of this fact as well as other background on intersection forms).

Form $Q$ has its orthogonal group defined as
$$
SO(Q;\R)=\{A\in SL(n,\R): A^TQA=Q\}.
$$
We will also consider the group $SO(Q;\Z)\subset SO(Q;\R)$ --- the group of integral matrices that are subject to the same condition.
\begin{proof}[Proof of Theorem~\ref{thm_simply_connected}]
Let $f\colon M\to M$ be an Anosov diffeomorphism. Diffeomorphism $f$ induces an automorphism of $H^{2n}(M;\Z)$ given by a matrix $A\in GL(N,\Z)$. After passing to a finite power of $f$ we can assume that
\begin{enumerate}
\item $f$ is orientation preserving;
\item $f$ preserves the orientation of the unstable distribution $E^u$;
\item $A\in SL(N,\Z)$;
\item $A$ does not have eigenvalues that are roots of unity except for 1.
\end{enumerate}
We have
$$
Q(Ax,Ay)=\langle Ax\smallsmile Ay, f^*[M]\rangle=\langle x\smallsmile y, [M]\rangle.
$$
hence $A\in SO(Q;\Z)$.

The Lefschetz formula gives the following expression for the number of points of period $l$
\begin{equation}\label{eq_action_hyperbolic}
|Fix(f^l)|=2+Tr(A^l)
\end{equation}
It follows that $A$ has an eigenvalue of absolute value $>1$.
\begin{remark}
If $Q$ is positive or negative definite then $SO(Q;\R)$ is compact and, hence, $SO(Q;\Z)$ is finite. Therefore $A^m=Id$ for some $m\ge 1$ and $f$ cannot be Anosov in this case.
\end{remark}

Let $V$ be the $A$-invariant $(N-k)$-dimensional subspace of $\R^N$ corresponding  to eigenvalue 1. Consider the orthogonal complement
$$
V^\perp=\{u\in\R^N: Q(u,v)=0\;\; \forall v\in V\}.
$$
It is easy to check that $V^\perp\cap\Z^N\simeq\Z^k$. Also it is easy to see that $V^\perp$ is $A$-invariant. Hence we have the splitting of the form
$$
Q=Q'\oplus Q''\stackrel{\mathrm{def}}{=}Q|_{V^\perp\cap\Z^N}\oplus Q|_{V\cap\Z^N}
$$
and the corresponding splitting of the automorphism. Let $A'\in SO(Q';\Z)$ be the restriction of $A$ to $V^\perp\cap\Z^N$.

By construction $A'$ does not have eigenvalues that are roots of unity. Also recall that, by Poincar\'e duality, $A'=D(A'^T)^{-1}D^{-1}$. Hence all real eigenvalues of $A'$ come in pairs $\lambda$, $\lambda^{-1}$, $\lambda\neq\lambda^{-1}$. Complex eigenvalues come in conjugate pairs. We conclude that $k$ is even. Note that~(\ref{eq_action_hyperbolic}) implies that $k\neq 0$.
Assume that $k=2$. Then, after changing the (integral) basis,  $Q'$ becomes one of the following forms
$$
Q_1=\begin{pmatrix}
1 & 0\\
0 & 1
\end{pmatrix},
Q_2=\begin{pmatrix}
-1 & 0\\
0 & -1
\end{pmatrix},
Q_3=\begin{pmatrix}
1 & 0\\
0 & -1
\end{pmatrix},
Q_4=\begin{pmatrix}
0 & 1\\
1 & 0
\end{pmatrix}.
$$
A direct computation yields
\begin{multline*}
SO(Q_1;\Z)=SO(Q_2;\Z)=\left\{
\begin{pmatrix}
1 & 0\\
0 & 1
\end{pmatrix},
\begin{pmatrix}
-1 & 0\\
0 & -1
\end{pmatrix},
\begin{pmatrix}
0 & -1\\
1 & 0
\end{pmatrix},
\begin{pmatrix}
0 & 1\\
-1 & 0
\end{pmatrix}
\right\},
\\
SO(Q_3;\Z)=SO(Q_4;\Z)=\left\{
\begin{pmatrix}
1 & 0\\
0 & 1
\end{pmatrix},
\begin{pmatrix}
-1 & 0\\
0 & -1
\end{pmatrix}
\right\}
\end{multline*}
Each of these matrices have eigenvalues that are roots of unity. We conclude that $k\ge 4$. Note that since $\chi(M)=N+2\neq 0$, the discussion in Section~\ref{sec_char_class} implies that $N-k\ge 1$. Hence $N\ge 5$ which gives the posited result.
\end{proof}

We would like to remark that the above proof generalizes in a straightforward way to give the following result.
\begin{theorem}
Let $M$ be a simply connected $4n$-dimensional closed manifold. Assume that $M$ has a non-zero Euler characteristic. Also assume that the Betti numbers $b_k(M)\le 1$ for $k\neq 2n$ and $b_{2n}\le 4$. Then $M$ does not support Anosov diffeomorphisms.
\end{theorem}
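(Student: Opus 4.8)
The plan is to mimic the proof of Theorem~\ref{thm_simply_connected} almost verbatim, the only genuinely new feature being that $M$ now carries cohomology outside the degrees $0$, $2n$, $4n$. Suppose $f\colon M\to M$ is Anosov. Since $M$ is simply connected, every distribution is orientable, so after replacing $f$ by a suitable power I may assume that $f$ is orientation preserving, that it preserves the orientation of $E^u$ (so that every periodic point has index $(-1)^{\dim E^u}$), and that the automorphism $A$ induced by $f$ on $H^{2n}(M;\Z)/Tor\cong\Z^N$ lies in $SL(N,\Z)$ and has no eigenvalue which is a root of unity other than $1$. Because $2n$ is even, the intersection form $Q$ on $H^{2n}(M;\Z)/Tor$ is symmetric and, by Poincar\'e duality, unimodular, and the identity $Q(Ax,Ay)=Q(x,y)$ shows $A\in SO(Q;\Z)$, exactly as in Theorem~\ref{thm_simply_connected}.

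Next I would run the Lefschetz cancellation. In formula~(\ref{lef_formula}) every summand with $k\neq 2n$ has $b_k\le 1$, so $Tr\big((f^*)^{-l}|_{H^k}\big)$ equals $0$ or $\pm1$ and stays bounded; the only term that can grow is $(-1)^{2n}Tr(A^{-l})=Tr(A^{-l})$. Since an Anosov diffeomorphism has positive entropy, $|Fix(f^l)|$ grows exponentially by~(\ref{form_asymp_general}), whence $A$ must have an eigenvalue off the unit circle. In particular \emph{all} off-unit-circle eigenvalues of $f^*$ live in $H^{2n}$. The algebra of Theorem~\ref{thm_simply_connected} then applies unchanged: letting $V$ be the eigenvalue-$1$ invariant subspace of $A$ and $k=\dim V^\perp$, the restriction $A'=A|_{V^\perp}\in SO(Q|_{V^\perp};\Z)$ has no root-of-unity eigenvalue, so Poincar\'e duality forces its eigenvalues to occur in pairs $\lambda,\lambda^{-1}$; thus $k$ is even, $k\neq 0$, and the explicit list of rank-two unimodular forms rules out $k=2$. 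Hence $k\ge 4$.

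It remains to produce the extra eigenvalue $1$ in $H^{2n}$, that is, to show $\dim V=N-k\ge 1$; this gives $N\ge 5$, contradicting $b_{2n}\le 4$. As in Corollary~\ref{cor2}, $\chi(M)\neq 0$ gives $e(TM)\neq 0$, and the Whitney product formula $e(E^s)\smallsmile e(E^u)=e(TM)$ forces $e(E^s)$ to be a nonzero $f^*$-fixed class in $H^{\dim E^s}(M;\R)$. \textbf{The main obstacle is to show that this class actually sits in $H^{2n}$, equivalently that $\dim E^s=\dim E^u=2n$.} Granting this, $e(E^s)\in H^{2n}$ is fixed, so $1$ is an eigenvalue of $A$, $\dim V\ge 1$, and the proof is complete. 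To establish $\dim E^u=2n$ I would invoke the Ruelle--Sullivan construction: the asymptotic cycle of the unstable lamination yields a nonzero class in $H^{\dim E^u}(M;\R)$ on which $f^*$ acts with an eigenvalue of modulus $e^{h_{top}(f)}>1$; since by the previous paragraph every off-unit-circle eigenvalue of $f^*$ lies in $H^{2n}$, this forces $\dim E^u=2n$. For a transitive $f$ this is precisely Theorem~\ref{tmRS}, and the step deserving the most care is that the present statement does \emph{not} assume transitivity: one must therefore carry out the Ruelle--Sullivan argument on a basic set of maximal entropy (on which $f$ is transitive) and verify that the resulting foliated current still defines a nonzero hyperbolic eigenclass in $H^{\dim E^u}(M;\R)$.
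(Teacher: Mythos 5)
Your proposal follows the paper's route: the paper's own ``proof'' of this statement is literally the remark that the argument for Theorem~\ref{thm_simply_connected} generalizes, and your algebraic core --- $A\in SO(Q;\Z)$ on $H^{2n}(M;\Z)/Tor$, passage to a power killing root-of-unity eigenvalues other than $1$, the pairing $\lambda\mapsto\lambda^{-1}$ forcing $k=\dim V^{\perp}$ to be even, the explicit list of rank-two unimodular forms excluding $k=2$, and the Euler class supplying an extra eigenvalue $1$ so that $N\geq k+1\geq 5$ --- is exactly that argument. Your handling of the degrees $j\neq 2n$ is also the correct, and genuinely straightforward, adaptation: since $b_j\leq 1$ there, those traces stay bounded, so the exponential growth of $|Fix(f^l)|$ from~(\ref{form_asymp_general}) must come from $H^{2n}$, and every eigenvalue of $f^*$ off the unit circle lives in degree $2n$.

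The gap is the step you yourself flag in bold: that $\dim E^s=\dim E^u=2n$, so that the nonzero $f^*$-fixed class $e(E^s)$ actually sits in $H^{2n}$ and forces $\dim V\geq 1$. In Theorem~\ref{thm_simply_connected} this is automatic, because $H^j(M;\R)=0$ for $j\neq 0,2n,4n$ while $e(E^s)\smallsmile e(E^u)=e(TM)\neq 0$ forces both Euler classes to lie in nonvanishing groups. Here the hypotheses permit $b_s=1$ for an even $s\neq 2n$, in which case $e(E^s)$ could a priori generate a rank-one $H^s(M;\R)$ on which $f^*$ acts as the identity, and nothing so far is contradicted: the Lefschetz asymptotics are perfectly consistent with $N=k=4$ (for instance $A$ with a Salem characteristic polynomial, which occurs in $SO(Q;\Z)$ for indefinite rank-four $Q$). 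Your proposed repair --- a Ruelle--Sullivan class in $H^{\dim E^u}(M;\R)$ with eigenvalue of modulus $e^{h_{top}(f)}>1$, hence necessarily in degree $2n$ --- would close the gap, but Theorem~\ref{tmRS} as stated requires transitivity, which this statement does not assume, and you do not carry out the extension to a basic set: for a non-attracting basic set the unstable current need not be closed, and even when it is, the nonvanishing of its class in $H^{\ast}(M;\R)$ (which in the transitive case comes from pairing the stable and unstable currents against the measure of maximal entropy) is not established. So as written the proof is incomplete at precisely the one point where the statement exceeds Theorem~\ref{thm_simply_connected}; in fairness, the paper's claim that the generalization is ``straightforward'' also passes over this point in silence, and you have at least located where the real work (or an added hypothesis such as $b_j=0$ for even $j\neq 0,2n,4n$, or transitivity) is required.
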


\bigskip
Andrey Gogolev\\
\smallskip
\address{Mathematical Sciences\\
SUNY Binghamton\\
Binghamton, New York 13902-6000 }\\

Federico Rodriguez Hertz\\
\smallskip
\address{Department of Mathematics\\
 Ê Ê Ê ÊThe Pennsylvania State University\\
 Ê Ê Ê ÊUniversity Park, PA 16802 \\
 Ê Ê Ê ÊUSA}


\begin{thebibliography}{texttLL}
\bibitem[H71]{H} M. W. Hirsch, {\it Anosov maps, polycyclic groups, and homology.} Topology 10, 1971, 177--183.
\bibitem[Fr70]{Fr} J. Franks, {\it Anosov diffeomorphisms.} Global Analysis, Proc. Sympos. Pure Math, 14, AMS, Providence, R.I. 1970, 61--93.
\bibitem[Br77]{Br} M. Brin, {\it Nonwandering points of Anosov diffeomorphisms.} Dynamical systems, Vol. 1, Asterisque, No. 49, Soc. Math. France, Paris, 1977, 11--18. 
\bibitem[BrM81]{BrM} M. Brin, A. Manning, {\it
Anosov diffeomorphisms with pinched spectrum.} Dynamical systems and turbulence, Warwick 1980,
Lecture Notes in Math., 898, Springer, Berlin-New York, 1981, 48--53.
\bibitem[KK]{KK} V.A. Kleptsyn, Yu. G. Kudryashov, {\it New examples of manifolds admitting no Anosov diffeomorphisms.} Preprint.
\bibitem[MSt]{MSt} J. W. Milnor, J. D. Stasheff, {\it Characteristic classes. } Annals of Mathematics Studies, No. 76. Princeton University Press, Princeton, N. J.; University of Tokyo Press, Tokyo, 1974. vii+331 pp.
\bibitem[N70]{N} S. E. Newhouse, {\it
On codimension one Anosov diffeomorphisms. }
Amer. J. Math. 92, 1970, 761--770. 
\bibitem[Q86]{Q} F. Quinn, {\it Isotopy of 4-manifolds.} J. Differential Geometry, 24 (1986), 343-372.
\bibitem[RS75]{RS} D. Ruelle, D. Sullivan, {\it Currents, flows and diffeomorphisms. } Topology 14 (1975), no. 4, 319--327. 
\bibitem[Sc05]{Sc} A. Scorpan, {\it The wild world of 4-manifolds.} American Mathematical Society, Providence, RI, 2005. xx+609 pp.
\bibitem[Sh73]{Sh} K. Shiraiwa, {\it Manifolds which do not admit Anosov diffeomorphisms. }
Nagoya Math. J. 49 (1973), 111--115. 
\bibitem[Sm67]{Sm} S. Smale, {\it Differentiable dynamical systems.} Bull. Amer. Math. Soc. 73, 1967, 747--817.
\bibitem[Sp66]{Sp} E. H. Spanier, {\it Algebraic topology. } McGraw-Hill Book Co., New York-Toronto, Ont.-London 1966 xiv+528 pp. 
\bibitem[V73]{vick} J. W. Vick, {\it
Homology theory. 
An introduction to algebraic topology.} Pure and Applied Mathematics, Vol. 53. Academic Press, New York-London, 1973. xi+237 pp. 
\bibitem[Y83]{Y} K. Yano, {\it
There are no transitive Anosov diffeomorphisms on negatively curved manifolds. }
Proc. Japan Acad. Ser. A Math. Sci. 59 1983, no. 9, p. 445. 
\end{thebibliography}
\end{document}